\theoremstyle{plain}
\newtheorem{theorem}{Theorem}[section]
\newtheorem{lemma}[theorem]{Lemma}
\newtheorem{corollary}[theorem]{Corollary}
\newtheorem{proposition}[theorem]{Proposition}
\theoremstyle{definition}
\newtheorem{definition}[theorem]{Definition}
\theoremstyle{remark}
\newtheorem{remark}[theorem]{Remark}
\DeclareMathOperator{\home}{home}
\DeclareMathOperator{\trop}{Trop}
\DeclareMathOperator{\ptrop}{Trop_{>0}}
\DeclareMathOperator{\nntrop}{Trop_{\geq 0}}
\DeclareMathOperator{\init}{in}
\DeclareMathOperator{\sat}{Sat}
\DeclareMathOperator{\Hom}{Hom}
\newcommand{\Q}{\mathbb{Q}}
\newcommand{\Qi}{\overline{\mathbb{Q}}}
\newcommand{\R}{\mathbb{R}}
\newcommand{\Ri}{\overline{\mathbb{R}}}
\newcommand{\C}{\mathbb{C}}
\newcommand{\Z}{\mathbb{Z}}
\newcommand{\V}{\mathcal{V}}
\begin{document}
\title[Universal valued fields]{Universal valued fields and lifting points
in local tropical varieties}
\author{D.~A. Stepanov}
\address{The Department of Mathematical Modelling \\
         Bauman Moscow State Technical University \\
         2-ya Baumanskaya ul. 5, Moscow 105005, Russia}
\email{dstepanov@bmstu.ru}
\thanks{The research was supported by the Russian Grant for Leading 
Scientific Schools no. 5139.2012.1, and RFBR grant no. 11-01-00336-a.}
\date{April 29, 2013}

\begin{abstract}
Let $k$ be a field with a real valuation $\nu$ and $R$ a $k$-algebra. We show
that there exist a $k$-algebra $K$ and a real valuation $\mu$ on $K$ extending 
$\nu$ such that any real ring valuation of $R$ is induced by $\mu$ via some 
homomorphism from $R$ to $K$; $K$ can be chosen to be a field. Then we study the 
case when $\nu$ is trivial and $R$ a complete local Noetherian ring with the 
residue field $k$. Let $K$ be the ring $\bar{k}[[t^\R]]$ of Hahn series with 
its natural valuation $\mu$; $\bar{k}$ is an algebraic closure of $k$. 
Despite $K$ is not universal in the strong sense defined above, 
it has the following weak universality property: for any local valuation $v$ and 
a finite set of elements $x_1,\ldots,x_n$ of $R$ there exists a homomorphism 
$f\colon R\to K$ such that $v(x_i)=\mu(f(x_i))$, $i=1,\ldots,n$. If $R=
k[[x_1,\ldots,x_n]]/I$ for an ideal $I$, this property implies that every point 
of the local tropical variety of $I$ lifts to a $K$-point of $R$. Similarly,
if $R=k[x_1,\ldots,x_n]/I$ is a finitely generated algebra over $k$, lifting
points in the tropical variety of $I$ can be interpreted as the weak universality
property of the field $\bar{k}((t^\R))$ of Hahn series.
\end{abstract}

\maketitle

\section{Introduction}

Let $k$ be a field and $R$ a commutative $k$-algebra with unity. A \emph{real} 
(or \emph{rank one}) \emph{valuation} on $R$ is a function $v\colon R\to \R\cup 
\{+\infty\}$ such that $v(1)=0$, $v(0)=+\infty$, $v(xy)=v(x)+v(y)$ and 
$v(x+y)\geq \min\{v(x),v(y)\}$ for all $x$, $y\in R$. In this paper we consider 
only real ring valuations. Assume that $k$ is endowed with some valuation $\nu$ 
and $K$ is some $k$-algebra with a valuation $\mu$ extending $\nu$. Then every
homomorphism $f\colon R\to K$ of $k$-algebras induces a valuation $\mu\circ f$
of $R$ extending $\nu$. The first question we address in this paper is the 
following: given $R$ and $\nu$, does there exist a universal valued field $K$
such that \emph{any} valuation of $R$ that extends $\nu$ is obtained via some
homomorphism $f\colon R\to K$ as described above? We give an affirmative answer 
to this question in Theorem~\ref{T:ufield} of Section~\ref{S:ufield}. Note that 
the universal field we are looking for is different from the maximal immediate 
extension of $k$ (\cite[p. 191]{Krull}) since we do not insist that $v$ must have 
the same residue field as $\nu$.

The second problem that we study in this work is lifting points in local
tropical varieties. If $R$ is a local ring with the maximal ideal $\mathfrak{m}$,
we call a valuation $v$ of $R$ \emph{local}, if $v$ is nonnegative on $R$
and positive on $\mathfrak{m}$. Note that if $R$ contains a field $k$, any
local valuation of $R$ must be trivial on $k$, i.e., $v(x)=0$ for all $x\in k$,
$x\ne 0$. Now let $R=k[[x_1,\ldots,x_n]]$ be the ring of formal power series
in variables $x_1,\ldots,x_n$, and $I\subset R$ an ideal. The \emph{local 
tropicalization} $\ptrop(I)$ of $I$ is the set
$$\ptrop(I)=\{(v(x_1),\ldots,v(x_n)\,|\,v \text{ is local},\; v|_I=+\infty\}
\subseteq \Ri^n,$$
where $v$ runs over all local valuations of $R$ that take value $+\infty$ on
the ideal $I$, and $\Ri=\R\cup\{+\infty\}$. This is a particular case of
\cite[Definition~6.6]{PPS}; see also Section~\ref{S:liftpoint}. Let us introduce 
two special fields. Denote by $\bar{k}$ some algebraic closure of $k$. The 
field $K=\bar{k}((t^Q))=\bigcup_{N\geq 1}\bar{k}((t^{1/N}))$ of \emph{Puiseux 
series} with coefficients in $\bar{k}$ is the set of all formal sums
$$\sum_{m\in\Q} a_m t^m,$$ 
where $a_m\in \bar{k}$, the set $\{m\,|\,a_m\ne 0\}\subset\Q$ is bounded from
below, and all its elements have bounded denominators. The field $K'=
\bar{k}((t^\R))$ of \emph{Hahn series} is the set of all formal 
sums
$$a(t)=\sum_{m\in \R} a_m t^m,$$
where the set $\{m\,|\,a_m\ne 0\}$ is well-ordered, that is, any its subset has 
the least element. 
Addition and multiplication in $K$ and $K'$ are defined in a natural way. 
These fields carry also a natural valuation $\mu$ determined by $\mu(t)=1$. 
The \emph{ring of Puiseux} (respectively \emph{Hahn}) \emph{series} 
$\mathcal{O}=\bar{k}[[t^\Q]]$ (resp., $\mathcal{O}'=\bar{k}[[t^\R]]$) is the 
valuation ring of $\mu$, i.e., the subring of $K$ (resp. $K'$) where $\mu$ is 
nonnegative. In analogy with the theory of usual non-local tropicalization 
(see, e.g., \cite{JMM}), we say that a point $w=(w_1,\ldots,w_n)\in 
\ptrop(I)\cap\Qi^n$, where $\Qi=\Q\cup\{+\infty\}$, \emph{lifts to} $\mathcal{O}$, 
if there exists a homomorphism $f\colon R\to K$ such that $w_i=\mu(f(x_i))$ for 
all $i=1,\ldots,n$. Lifting of a point $w\in\ptrop(I)$ to $\mathcal{O}'$ can 
be defined similarly. In the non-local theory of tropicalization, it is known 
that if characteristic of $k$ is $0$, then each rational point of the 
tropicalization of a variety admits a lifting to a $K$-point of the variety. 
Several proofs and generalizations are given in \cite{Dra}, \cite{JMM}, 
\cite{Katz}, \cite{P}. For $k=\C$ and local tropical varieties, a similar 
result was announced by N. Touda in \cite{To}, however, a complete proof 
did not appear. 

The possibility to lift points of local tropical varieties to $\mathcal{O}$
or $\mathcal{O}'$ means that these rings play a role of a kind of universal 
domains with respect to valuations on local $k$-algebras $R$, where the field 
$k$ is trivially valued. This is not the strong universality introduced
in the first paragraph of this Introduction. Indeed, if we set $R=k[[x,y]]$
and $v$ to be a monomial valuation defined by $v(x)=v(y)=1$, then it is easy
to see that $v$ is not induced by any homomorphism from $R$ to $\mathcal{O}$ or 
$\mathcal{O}'$. However, it is easy to define a homomorphism $f\colon R\to 
\mathcal{O}$, say, by sending $x$ and $y$ to $t$, such that $\mu(f(x))=v(x)$,
$\mu(f(y))=v(y)$. Precisely, we have the following property, which we call
\emph{weak universality} of the rings of Puiseux and Hahn series. 
Assume that $R$ is a complete local Noetherian ring, $k$ is a field isomorphic to 
the residue field of $R$ and contained in $R$, and $x_1,\ldots,x_n\in R$ a finite 
collection of elements. Then for each local valuation $v$ of $R$ there exists a 
homomorphism $f\colon R\to \mathcal{O}'$ such that $v(x_i)=\mu(f(x_i))$ for 
all $i=1,\ldots,n$. If, moreover, $k$ has characteristic $0$ and $v$ takes
only rational values, then there exists a homomorphism $f\colon R\to \mathcal{O}$ 
with the same property. Similarly, lifting points in the non-local tropical 
varieties can be expressed as the weak universality of the \emph{fields} of 
Puiseux and Hahn series with respect to valuations on finitely generated 
$k$-algebras $R$. We prove these properties in Section~\ref{S:wufield} 
and then apply them to the local tropicalization in Section~\ref{S:liftpoint}.

Our proof of the weak universality of the Puiseux and Hahn series rings is closed 
to the proof of the lifting points property of tropical varieties given in 
\cite{JMM}. We also use a descent by dimension, but we descent not to dimension 
$0$ but to dimension $r$ equal to the rational rank of the value group of $v$; 
this allows to work always over the field $\bar{k}$ and not to pass to varieties 
over the field of Puiseux series.

I thank B.~Teissier and M.~Matusinski for their comments on the draft of this
work.

\section{The universal field}\label{S:ufield}

In this section we show existence of the universal valued field for
a given valued ring $k$ and a $k$-algebra $R$. The proof is based on the
following theorem about extension of valuations to a tensor product. If $A$ is
a ring, $B$ is an $A$-algebra, and $v$ is a valuation of $B$, then $v$ induces
a valuation of $A$ which we call the \emph{restriction} of $v$ and denote
$v|_A$.

\begin{theorem}\label{T:tensor}
Let $A$ be a commutative ring with unity and $B$ and $C$ $A$-algebras, also with
unity. Let $u$ be a valuation of $B$, $v$ a valuation of $C$, and assume that
$u|_A=v|_A$. Then there exists a valuation $w$ of $B\bigotimes_A C$ that extends
both $u$ and $v$: $w|_B=u$, $w|_C=v$, where $B$ and $C$ map to $B\bigotimes_A C$
as $B\ni b\mapsto b\otimes 1$, $C\ni c\mapsto 1\otimes c$.
\end{theorem}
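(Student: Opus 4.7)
The plan is to construct the valuation via a filtered-ring / associated-graded argument, which naturally stays in rank one because the indexing set is $\R$.

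First I would reduce to fields. The supports $\mathfrak{p}_u = \{b : u(b) = +\infty\}$ and $\mathfrak{p}_v = \{c : v(c) = +\infty\}$ are prime ideals, and $u|_A = v|_A$ forces $\mathfrak{p}_u \cap A = \mathfrak{p}_v \cap A$. Quotienting by the supports and passing to fraction fields gives field extensions $L = \mathrm{Frac}(B/\mathfrak{p}_u)$ and $M = \mathrm{Frac}(C/\mathfrak{p}_v)$ of $F = \mathrm{Frac}(A/(\mathfrak{p}_u \cap A))$, with the uniquely extended real valuations $\tilde u$, $\tilde v$ agreeing on $F$. Any valuation on a prime quotient of $L \otimes_F M$ that extends $\tilde u$ and $\tilde v$ pulls back along $B \otimes_A C \to L \otimes_F M$ to the desired valuation on $B \otimes_A C$, so I may work inside $L \otimes_F M$.

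Next I would equip $L \otimes_F M$ with the $\R$-indexed filtration
$$\mathcal{F}_\gamma = \{\,x \in L \otimes_F M : x = \sum_i l_i \otimes m_i \text{ with } \tilde u(l_i) + \tilde v(m_i) \geq \gamma \text{ for all } i\,\},$$
a decreasing family of additive subgroups satisfying $\mathcal{F}_\gamma \cdot \mathcal{F}_\delta \subseteq \mathcal{F}_{\gamma + \delta}$. The associated graded $\mathrm{gr} = \bigoplus_{\gamma \in \R} \mathcal{F}_\gamma / \mathcal{F}_{>\gamma}$ is an $\R$-graded ring, and I would identify it with (a quotient of) the twisted group ring $(\kappa_{\tilde u} \otimes_{\kappa_F} \kappa_{\tilde v})[\tilde u(L^*) + \tilde v(M^*)]$, whose grading subgroup already lies in $\R$. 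Since $\kappa_{\tilde u} \otimes_{\kappa_F} \kappa_{\tilde v} \neq 0$ as a tensor of field extensions of $\kappa_F$, picking a minimal prime of this coefficient ring yields an $\R$-graded integral quotient.

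Lifting that graded prime to a prime $\mathfrak{q} \subset L \otimes_F M$ compatible with the filtration, the induced filtration on $(L \otimes_F M)/\mathfrak{q}$ has integral-domain associated graded, so the formula $w(x) = \sup\{\gamma : x \in \mathcal{F}_\gamma + \mathfrak{q}\}$ defines a valuation on $(L \otimes_F M)/\mathfrak{q}$, necessarily of rank one because its value group lies in $\R$. The hard part will be verifying the identification of the associated graded as the claimed twisted group ring --- this requires careful book-keeping on a tensor product of two separately filtered rings --- and then showing that a filtration-compatible lift of a homogeneous prime of $\mathrm{gr}$ exists in $L \otimes_F M$. Pulling $w$ back along $B \otimes_A C \to L \otimes_F M$ and checking $w|_B = u$, $w|_C = v$ (which holds because the filtration induced on $L$ inside $L \otimes_F M$ is the $\tilde u$-adic one, and similarly for $C$) then completes the proof.
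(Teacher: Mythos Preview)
Your filtered/graded approach is a genuine third route, distinct from both arguments in the paper. The paper's first proof reduces to an algebraically closed base field and then quotes a theorem of Yaacov (or Poineau) that the tensor seminorm on $K\otimes_k L$ is multiplicative; the second proof treats the finitely generated case first via explicit initial-ideal and Gr\"obner computations and then bootstraps to arbitrary algebras using Bergman's theorem on pseudovaluations. Your filtration $\mathcal{F}_\gamma$ is the tensor pseudovaluation in disguise, and ``lifting a homogeneous prime of $\mathrm{gr}$'' is essentially what Bergman's theorem supplies, so your plan is closest in spirit to the second proof but skips the reduction to finitely generated algebras.

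There is, however, a real gap. Your argument that $\mathrm{gr}$ has a graded integral quotient rests on $\kappa_{\tilde u}\otimes_{\kappa_F}\kappa_{\tilde v}\neq 0$, but that only shows the \emph{source} of your surjection onto $\mathrm{gr}$ is nonzero; it does not show $\mathrm{gr}\neq 0$. Concretely, you must rule out a representation $1=\sum_i l_i\otimes m_i$ with every $\tilde u(l_i)+\tilde v(m_i)>0$, i.e.\ show $1\otimes 1\notin\mathcal{F}_{>0}$. This is exactly the statement that the tensor seminorm restricts to $\tilde u$ on $L$, and it is not automatic: the standard proof goes through approximately orthogonal ($\epsilon$-cartesian) bases of finite-dimensional $F$-subspaces of $M$. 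Your closing parenthetical --- that $w|_L=\tilde u$ ``because the filtration induced on $L$ inside $L\otimes_F M$ is the $\tilde u$-adic one'' --- is therefore circular, since that is precisely the lemma in question. Once $\mathrm{gr}\neq 0$ is established, the rest of your outline does work: the map $\mathrm{gr}(L)\to\mathrm{gr}$ is then injective (its source is a graded field), so images of $l\otimes 1$ are homogeneous units and avoid every homogeneous prime, and Bergman-type lifting finishes the job. A smaller correction: the graded ring surjecting onto $\mathrm{gr}$ is $\mathrm{gr}(L)\otimes_{\mathrm{gr}(F)}\mathrm{gr}(M)$, whose degree-$0$ piece is generally larger than $\kappa_{\tilde u}\otimes_{\kappa_F}\kappa_{\tilde v}$ (take $F$ trivially valued and $L,M$ with the same nontrivial value group), so your explicit formula for the ``twisted group ring'' is not right, though this does not affect the strategy.
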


This is stated without a proof in \cite[1.1.14f]{H}. Below we give two proofs of
Theorem~\ref{T:tensor}. The \emph{first proof} follows from recent stronger
results of I. B. Yaacov \cite{Itai} and J. Poineau \cite{Poi}. It is known that
in the conditions of Theorem~\ref{T:tensor}, the tensor product $B\bigotimes_A C$
carries a natural seminorm $\|\cdot\|$:
$$\|z\|=\inf \max_i (\exp(-u(x_i)-v(y_i))),\quad z=\sum_i x_i\otimes y_i,$$
where the infinum is taken over all representations of $z\in B\bigotimes_A C$ as
$\sum x_i\otimes y_i$, $x_i\in B$, $y_i\in C$.

\begin{theorem}[{\cite[Theorem 6]{Itai}}]\label{T:Itai}
Let $k$ be a valued algebraically closed field, and $K$ and $L$ two field 
extensions of $k$. Assume that $K$ and $L$ are endowed with valuations $u$ and
$v$ that restrict to the given valuation of $k$. Then, the natural seminorm 
$\|\cdot\|$ of $K\bigotimes_k L$ is multiplicative and the function 
$-\log\|\cdot\|\colon K\bigotimes_k L\to \Ri$ is a valuation extending both 
$u$ and $v$.
\end{theorem}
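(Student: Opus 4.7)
The plan is to verify three properties of the natural seminorm $\|\cdot\|$: that it extends $u$ on $K\otimes 1$ and $v$ on $1\otimes L$, that it is multiplicative, and that $\|z\|=0$ forces $z=0$. Subadditivity and submultiplicativity are immediate from the infimum definition, and together with the three properties above they upgrade $-\log\|\cdot\|$ to a valuation on $K\bigotimes_k L$ extending both $u$ and $v$.

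A first reduction is to the case when $K$ and $L$ are finitely generated extensions of $k$: every element of $K\bigotimes_k L$ lies in $K_0\bigotimes_k L_0$ for some finitely generated subfields $K_0\subseteq K$, $L_0\subseteq L$, and the infimum defining its seminorm is taken over finite representations inside that smaller tensor product. I would then choose transcendence bases $x_1,\ldots,x_m$ of $K/k$ and $y_1,\ldots,y_n$ of $L/k$ carefully enough that the restrictions $u|_{k(x_i)}$ and $v|_{k(y_j)}$ are Gauss-type valuations and that $K/k(x_i)$ and $L/k(y_j)$ are finite algebraic. The core technical input is then the classical Gauss lemma for valued fields: for a valued field $(F,w)$ and $x$ transcendental over $F$ with a prescribed value $\gamma\in\R$, the formula $w'(\sum a_i x^i)=\min_i(w(a_i)+i\gamma)$ defines a valuation on $F[x]$ which extends uniquely to $F(x)$. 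Iterating this over $x_1,\ldots,x_m,y_1,\ldots,y_n$ yields a valuation on the rational function field $k(x_1,\ldots,x_m,y_1,\ldots,y_n)$; restricting along the natural injection $k(x_i)\bigotimes_k k(y_j)\hookrightarrow k(x_i,y_j)$ produces a multiplicative valuation on that subring which visibly agrees with $-\log\|\cdot\|$ on monomial tensors.

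To reach the full statement I would propagate the valuation across the finite algebraic extensions $k(x_i)\subseteq K$ and $k(y_j)\subseteq L$, using that a valuation on an algebraic extension of an algebraically closed valued base has a canonical extension. Here the hypothesis $k=\bar k$ is crucial: it ensures that $K$ and $L$ are regular extensions of $k$, so $K\bigotimes_k L$ remains an integral domain, and it pins down the choice of valuation extension along each algebraic step. The outcome is an explicit multiplicative valuation $w$ on the fraction field of $K\bigotimes_k L$ extending both $u$ and $v$. The step I expect to be the main obstacle is the final identification $\exp(-w)=\|\cdot\|$ on the whole tensor product: one inequality follows from the obvious test representations, while the reverse is a genuine non-cancellation statement about arbitrary representations $z=\sum x_i\otimes y_i$, which is the real content of the Gauss lemma in this setting and relies crucially on $k$ being algebraically closed.
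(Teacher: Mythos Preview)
The paper does not give its own proof of this theorem: it is quoted from \cite{Itai}, and the text only remarks that Yaacov's argument uses ultrapowers and quantifier elimination, with an alternative via Poineau's affinoid methods \cite{Poi}. Your sketch is a third, more elementary-looking route, but it has a genuine gap at the first reduction. You cannot in general choose a transcendence basis $x_1,\dots,x_m$ of $K/k$ so that $u|_{k(x_1,\dots,x_m)}$ is a Gauss (monomial) valuation. Already for transcendence degree~$1$: if $k$ is algebraically closed but not spherically complete (e.g.\ $\C_p$), the Berkovich line over $k$ carries type~$4$ points, and the corresponding valuations on $k(x)$ are not Gauss with respect to any M\"obius change of generator, since a Gauss extension always enlarges either the value group or the residue field. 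In higher transcendence degree the same obstruction persists: an arbitrary rank-one valuation need not be Abhyankar on any purely transcendental subfield, so the iterated-Gauss construction cannot reproduce $u$.

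There is also a confusion in the propagation step: the algebraic extensions $k(x_1,\dots,x_m)\subset K$ are over fields that are \emph{not} algebraically closed, so the hypothesis $k=\bar k$ does not make valuation extensions along them canonical; it only guarantees that $K\otimes_k L$ is a domain. And even granting some valuation $w$ on $K\otimes_k L$ extending $u$ and $v$, the inequality $\exp(-w)\le\|\cdot\|$ is automatic from the definition of the seminorm, whereas the reverse inequality---equivalently, multiplicativity of $\|\cdot\|$---is precisely the content of the theorem. You correctly flag this as the main obstacle but supply no argument; the proofs in \cite{Itai} and \cite{Poi} address exactly this point by model-theoretic and analytic means that do not reduce to the Gauss lemma.
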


In \cite{Itai}, this theorem is proven with a help of non-standard technique 
(ultrapowers) and results on quantifier elimination in some formal theories. It
can also be deduced from \cite[Section 3]{Poi}, where the technique is the 
theory of affinoid algebras. To reduce Theorem~\ref{T:tensor} to 
Theorem~\ref{T:Itai}, assume first that the valuation $u$ or $v$ has a nontrivial
home. Let $\mathfrak{p}=\home(u)=\{x\in B|u(x)=+\infty\}$, $\mathfrak{q}=
\home(v)$. Denote by $\mathfrak{p}\otimes 1$ and $1\otimes\mathfrak{q}$ the
extensions of $\mathfrak{p}$ and $\mathfrak{q}$ to $B\bigotimes_A C$. Then
$u$ and $v$ can be considered as valuations on the rings $B/\mathfrak{p}$ and
$C/\mathfrak{q}$ respectively, and 
$$B/\mathfrak{p} \bigotimes_A C/\mathfrak{q}\simeq (B\bigotimes_A C)/
(\mathfrak{p}\otimes 1 + 1\otimes\mathfrak{q})$$
and $B/\mathfrak{p} \bigotimes_{A/\mathfrak{p}\cap A} C/\mathfrak{q}$ are
isomorphic as $A$-algebras. It follows that we can assume from the beginning
that $A$, $B$, and $C$ are domains, and $\home(u)=\home(v)=\{0\}$. Thus,
$u$ and $v$ extend canonically to the fields of fractions of $B$ and $C$.
Then, passing to the localizations (see, e.g., 
\cite[Propositions 3.5 and 3.7]{AM}), we can assume that $A=k$, $B=K$, and $C=L$
are fields, so it remains only to reduce to an algebraically closed field $k$.
Let $\bar{k}$ be an algebraic closure of $k$. Since $K'=K\bigotimes_k \bar{k}$
and $L'=L\bigotimes_k \bar{k}$ are integral extensions of $K$ and $L$ 
respectively, there exist a prime ideal $\mathfrak{p}\subset K'$ lying over 
$\{0\}$ and a prime ideal $\mathfrak{q}\subset L'$ lying over $\{0\}$. Now,
let $\Bar{K}$ and $\Bar{L}$ be the fields of fractions of 
$K'/\mathfrak{p}$ and $L'/\mathfrak{q}$ respectively. $\Bar{K}$ is an
algebraic extension of $K$, thus, by \cite[Chapter XII, \S 3]{Lang}, $u$ extends 
to $\Bar{K}$, and similarly $v$ extends to $\Bar{L}$. The tensor product 
$K\otimes_k L$ maps to $\Bar{K}\otimes_{\bar{k}}\Bar{L}$, and this map induces 
embeddings of valued fields $K\subseteq \Bar{K}$ and $L\subseteq \Bar{L}$. Thus,
Theorem~\ref{T:tensor} indeed follows from Theorem~\ref{T:Itai}.

The \emph{second proof} of Theorem~\ref{T:tensor} is based on the results of 
G. M. Bergman. This proof uses a more standard argument of commutative algebra, so 
we think it is of independent interest. A \emph{pseudovaluation} on a ring $R$ is 
a map $v\colon R\to\R\cup\{+\infty\}$ satisfying the same axioms as a valuation 
with the exception that instead of $v(xy)=v(x)+v(y)$ we require only 
$v(xy)\geq v(x)+v(y)$. 

\begin{theorem}\label{T:Bergman}
Let $p$ be a pseudovaluation on a commutative ring $R$ with unity, and $S$ a 
multiplicative subsemigroup in $(R,\cdot)$ such that $p|_S$ is a semigroup 
homomorphism from $S$ to $\Ri$. Let $I$ be an ideal of $R$ such that there 
is no $s\in S$, $f\in I$ satisfying $v(s)=v(f)< v(f-s)$. Then there exists a 
valuation $v\geq p$ on $R$ such that $v|_I=+\infty$, $v|_S=p|_S$.
\end{theorem}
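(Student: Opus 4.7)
The natural strategy is a Zorn's lemma argument: enlarge $p$ as much as possible inside the class of pseudovaluations satisfying the prescribed constraints, and show that any maximal element must actually be multiplicative.

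As a preliminary normalization, I would first replace $p$ by the pseudovaluation
\[
p^{\star}(r):=\sup_{f\in I}\, p(r-f).
\]
A routine check shows that the defining axioms of a pseudovaluation are preserved by the supremum, that $p^{\star}\geq p$, and that $p^{\star}|_{I}=+\infty$ (take $f=r$). The equality $p^{\star}|_{S}=p|_{S}$ is exactly where the hypothesis enters: for $s\in S$ and $f\in I$, the triangle inequality applied to $s=(s-f)+f$ shows that $p(s-f)>p(s)$ would force $p(s)\geq p(f)$, and then the forbidden configuration $p(s)=p(f)<p(s-f)$ together with the case $p(s)>p(f)$ (which already implies $p(s-f)=p(f)\leq p(s)$) rules this out. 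Hence $p(s-f)\leq p(s)$ for every $f\in I$, so $p^{\star}(s)=p(s)$.

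Next, let $\mathcal{P}$ be the set of all pseudovaluations $q$ on $R$ with $q\geq p^{\star}$, $q|_{I}=+\infty$ and $q|_{S}=p|_{S}$, ordered pointwise. The pointwise supremum of a chain in $\mathcal{P}$ is again in $\mathcal{P}$ (all pseudovaluation axioms and the three constraints are preserved by suprema along a totally ordered family), so Zorn's lemma yields a maximal element $v\in\mathcal{P}$. The task is to show that this $v$ is multiplicative, so $v\in\mathcal{P}$ is the desired valuation.

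The hard step, as always in this kind of argument, is the passage from maximality to multiplicativity. Suppose for contradiction that $v(xy)>v(x)+v(y)$ with both values finite; I would build $v'\in\mathcal{P}$ strictly above $v$. A first candidate is
\[
v'(r):=\sup_{n\geq 0}\bigl(v(ry^{n})-n\,v(y)\bigr),
\]
which is non-decreasing in $n$, satisfies $v'\geq v$, $v'(x)\geq v(xy)-v(y)>v(x)$, and is easily verified to be a pseudovaluation with $v'|_{I}=+\infty$. The delicate point is ensuring $v'|_{S}=p|_{S}$, since enlarging $v$ at $x$ might incidentally enlarge $v$ at some $s\in S$. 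To handle this I would restrict the refinement to multipliers compatible with $S$: pass to the associated graded ring $\mathrm{gr}_{v}R$ and observe that the assumed non-multiplicativity corresponds to a nonzero homogeneous zero-divisor supported away from the images of $S$ (by $v|_{S}=p|_{S}$ and the fact that $p|_{S}$ is multiplicative). Dividing the filtration by the ideal generated by this zero-divisor produces a strictly coarser associated graded ring, hence a strictly larger pseudovaluation; the $I$-condition is preserved because the zero-divisor lies outside the image of $S$, and the condition excluded in the hypothesis is exactly what prevents the refinement from colliding with $I$. This $v'$ contradicts maximality, forcing $v$ to be a valuation. The main obstacle is precisely this last construction: designing the refinement so that it enlarges $v$ while simultaneously respecting the constraints $v|_{I}=+\infty$ and $v|_{S}=p|_{S}$, the second of which is the one that genuinely uses the compatibility hypothesis.
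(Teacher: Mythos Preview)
Your first step---replacing $p$ by $p^{\star}(r)=\sup_{f\in I}p(r-f)$ and checking $p^{\star}|_S=p|_S$ via the hypothesis---is exactly what the paper does (the paper writes $q(x)=\sup_{f\in I}p(x+f)$, which is the same thing). After this normalization the condition $v|_I=+\infty$ is automatic for any $v\geq p^{\star}$, so the remaining task is purely: given a pseudovaluation $p^{\star}$ that is already multiplicative on a subsemigroup $S$, find a valuation $v\geq p^{\star}$ with $v|_S=p^{\star}|_S$. The paper does not prove this; it simply invokes \cite[Theorem~2]{Berg} as a black box.

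You instead try to reprove Bergman's theorem from scratch, and this is where the proposal becomes a sketch rather than a proof. The Zorn setup is fine, but the ``maximality $\Rightarrow$ multiplicativity'' step is the entire content of Bergman's result, and your argument for it is incomplete. The candidate $v'(r)=\sup_{n}\bigl(v(ry^{n})-n\,v(y)\bigr)$ can genuinely raise values on $S$: nothing prevents $v(sy)>v(s)+v(y)$ for some $s\in S$ when $y\notin S$. Your proposed fix via the associated graded ring is the right direction (the images of $S$ in $\mathrm{gr}_{v}R$ form a multiplicatively closed set of nonzero homogeneous elements, so one can localize away from them or choose a minimal prime avoiding them), but as written it is a gesture, not an argument. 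Also note that the $I$-$S$ compatibility hypothesis plays no further role here---it was fully consumed in showing $p^{\star}|_S=p|_S$---so your closing remark that the hypothesis is ``genuinely used'' in the refinement step is misplaced.

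In short: the reduction matches the paper; the remainder is an attempted reproof of a cited result, with the hard step left unresolved. Either cite Bergman's theorem as the paper does, or carry the graded-ring argument through in full.
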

\begin{proof}
This theorem is a direct generalization of \cite[Corollary 1]{Berg}. Indeed, the
formula $q(x)=\sup_{f\in I} p(x+f)$ defines a pseudovaluation on $R$. On the
semigroup $S$, this pseudovaluation coincides with $p$. Then, by 
\cite[Theorem 2]{Berg} there exists a valuation $v\geq q$ on $R$ that coincides 
with $q$ on $S$. Since $q|_I=+\infty$, we have also $v|_I=+\infty$.
\end{proof}

We continue with an auxiliary lemma.

\begin{lemma}\label{L:fgtensor}
Let $B$ and $C$ be two finitely generated $A$-algebras over a ring $A$. Let $u$
and $v$ be valuations of $B$ and $C$ respectively that induce the same valuation
on $A$. Finally, let $x_1,\ldots,x_m\in B$ and $y_1,\ldots,y_n\in C$ be two fixed 
collections of elements. Then there exists a valuation $w$ on $B\bigotimes_A C$
such that for all $i=1,\ldots,m$ $w(x_i\otimes 1)=u(x_i)$, and for all
$j=1,\ldots,n$ $w(1\otimes y_j)=v(y_j)$.
\end{lemma}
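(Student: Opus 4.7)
The plan is to apply Bergman's theorem (Theorem~\ref{T:Bergman}) to a polynomial presentation of $B\otimes_A C$. Write $B\cong A[X_1,\dots,X_M]/J_B$ and $C\cong A[Y_1,\dots,Y_N]/J_C$, arranging (after enlarging the generating sets if necessary) that $X_1,\dots,X_m$ map to $x_1,\dots,x_m$ and $Y_1,\dots,Y_n$ map to $y_1,\dots,y_n$. With $P=A[X,Y]$ and $I=J_BP+J_CP$, one has $B\otimes_A C\cong P/I$. Let $\nu=u|_A=v|_A$ and let $\bar X_i\in B$, $\bar Y_j\in C$ be the images of the indeterminates. Equip $P$ with the Gauss-type weight pseudovaluation
$$p\Bigl(\sum_{\alpha,\beta}a_{\alpha\beta}X^\alpha Y^\beta\Bigr)=\min_{a_{\alpha\beta}\ne 0}\Bigl(\nu(a_{\alpha\beta})+\sum_i\alpha_i u(\bar X_i)+\sum_j\beta_j v(\bar Y_j)\Bigr),$$
and take $S\subset P$ to be the multiplicative semigroup of monic monomials $X^\alpha Y^\beta$; then $p|_S$ is an additive homomorphism into $\Ri$.

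The crux is to verify Bergman's hypothesis: no $s\in S$ and $f\in I$ satisfy $p(s)=p(f)<p(f-s)$. Suppose such $s=X^\gamma Y^\delta$ and $f=s+g\in I$ exist with $p(g)>p(s)$; after a preliminary reduction one may assume $u(x^\gamma)<+\infty$. I would project to $B[Y]=P/J_BP$ and equip it with the Gauss pseudovaluation $p'$ given by $u$ on $B$-coefficients and weights $v(\bar Y_j)$ on $Y_j$. Using $p'(\bar h)\geq p(h)$ for $h\in P$, a direct term-by-term estimate yields $\init_{p'}(\bar f)=x^\gamma Y^\delta$, viewed as an element of the graded ring $\mathrm{gr}_{p'}(B[Y])$ with $B$-coefficient of finite $u$-value. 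On the other hand, $\bar f\in J_CB[Y]$, and since $v$ is a valuation on $C=A[Y]/J_C$ with $v(\bar Y_j)$ as prescribed weights, a standard argument shows that the initial ideal $\init_{p_Y}(J_C)\subset\mathrm{gr}_\nu(A)[Y]$ contains no monomial $Y^\delta$, where $p_Y$ is the Gauss pseudovaluation on $A[Y]$. By the disjoint-variable factorization of initial ideals, the extension $\init_{p'}(J_CB[Y])$ likewise contains no element of the form $x^\gamma Y^\delta$ with nonzero coefficient in $\mathrm{gr}_u(B)$; this yields the required contradiction.

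Having verified Bergman's condition, the theorem furnishes a valuation $\tilde w\ge p$ on $P$ with $\tilde w|_I=+\infty$ and $\tilde w|_S=p|_S$, which descends along $P\twoheadrightarrow P/I\cong B\otimes_A C$ to a valuation $w$ with $w(x_i\otimes 1)=\tilde w(X_i)=p(X_i)=u(x_i)$ for $1\le i\le m$ and $w(1\otimes y_j)=v(y_j)$ for $1\le j\le n$. The main obstacle is the verification step, where the disjoint-variable factorization of $\init_{p'}(J_CB[Y])$ must be rigorously established; this is essentially a tropical-geometry fact about initial ideals of ideals in separated variable sets, together with care for degenerate cases where the valuations have nontrivial home, which can be reduced away at the outset by quotienting $B$ or $C$ by the corresponding prime ideal.
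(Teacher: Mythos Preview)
Your overall framework matches the paper's: present $B\otimes_A C$ as a quotient of a polynomial ring, equip it with the Gauss pseudovaluation $p$, take the monomial semigroup $S$, and apply Bergman's Theorem~\ref{T:Bergman}. The paper also first reduces to $A=k$ a field and to $\home(u)=\home(v)=0$ (its Step~1), and then verifies the Bergman hypothesis by showing $\init_w(I\otimes 1+1\otimes J)$ is monomial free.

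The divergence---and the gap---is in how you verify Bergman's hypothesis. You project to $B[Y]=P/J_BP$ and claim that $\init_{p'}(J_CB[Y])$ contains no element $[x^\gamma]Y^\delta$, appealing to a ``disjoint-variable factorization of initial ideals.'' But after the projection the extension $k[Y]\to B[Y]$ is a \emph{coefficient} extension, not the addition of a disjoint set of variables; the disjoint-variable picture lives upstairs in $k[X,Y]$, before you killed $J_B$. So the result you need is really that $\init_{p'}(J_CB[Y])$ equals the extension of $\init_{p_Y}(J_C)$ along the ring map $\mathrm{gr}_\nu(k)[Y]\to\mathrm{gr}_u(B)[Y]$, and that this extension is still monomial free. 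The second part is not hard ($\mathrm{gr}_u(B)$ is a graded domain free over the graded field $\mathrm{gr}_\nu(k)$, so one can expand in a homogeneous basis), but the first part---that forming the initial ideal commutes with the base change $k\to B$---is exactly where the interaction between the two valuations is hidden, and you have not supplied an argument. A naive proof attempt runs into the fact that $B$ need not admit a $k$-basis adapted to $u$.

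The paper avoids this by staying in $k[X,Y]$ symmetrically: Step~2 proves the genuine disjoint-variable fact $\init_w(I\otimes 1)=\init_{w^1}(I)\otimes 1$; Step~3 then shows $\init_w(I\otimes 1+1\otimes J)=\init_{w^1}(I)\otimes 1+1\otimes\init_{w^2}(J)$, handling the cancellations via the Gr\"obner-basis fact $(I\otimes 1)\cap(1\otimes J)=(I\otimes 1)(1\otimes J)$ (Corollary~\ref{C:product}); Step~4 checks monomial-freeness by evaluating at a torus point. Your asymmetric projection trades Step~3 for a base-change statement about initial ideals that is of comparable difficulty and is not standard enough to cite without proof. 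If you want to complete your route, you should prove directly that $\init_{p'}(J_CB[Y])=\init_{p_Y}(J_C)\cdot\mathrm{gr}_u(B)[Y]$; one way is to lift a homogeneous $\mathrm{gr}_\nu(k)$-basis of $\mathrm{gr}_u(B)$ to $B$ and argue term-by-term, but you must justify why such a lift controls $p'$-initial forms.
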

\begin{proof}
\emph{Step 1.} First, by the same argument as above we can reduce to the case 
when $A=k$ is a field, $B$ and $C$ are finitely generated domains over $k$, and 
$\home(u)=\{0\}$, $\home(v)=\{0\}$. 
Next, adding or eliminating some elements if necessary, we can suppose that 
$x_i$ generate $B$ and $y_j$ generate $C$ as rings over $k$, and none of $x_i$, 
$y_j$ is $0$. Represent $B$ as a quotient of a polynomial ring 
$k[X_1,\ldots,X_m]/I$ and $C$ as $k[Y_1,\ldots,Y_n]/J$ for ideals
$I\subset k[X]=k[X_1,\ldots,X_m]$ and $J\subset k[Y]=k[Y_1,\ldots,Y_n]$; here 
$x_i=X_i\mod I$, $y_j=Y_j\mod J$. 

\emph{Step 2.} Let us recall some terminology. We denote also by $u$ the 
valuation $u$ of $B$ restricted to $k$ ($=$ $v$ of $C$ restricted to $k$). If 
$w=(w_1,\ldots,w_m)\in\R^n$ is a vector and $f=\sum a_M X^M\in k[X]$ is a 
polynomial in $m$ variables, the number 
$$w(f)=\min_M (u(a_M)+\langle w,M\rangle),$$
where $\langle w,M\rangle=\sum w_iM_i$, is called \emph{$w$-order} of $f$. The
polynomial 
$$\init_w(f)= \sum_{u(a_M)+\langle w,M\rangle=w(f)} a_M X^M$$
is called the \emph{$w$-initial form} of $f$. For an ideal $I\subset k[X]$,
the ideal $\init_w I=(\init_w(f)|f\in I)$ generated by all $w$-initial forms
of elements of $I$ is called the \emph{$w$-initial ideal} of $I$.

Now let $w^1=(u(x_1),\ldots,u(x_m))\in\R^m$, $w^2=(v(y_1),\ldots,w(y_n))\in\R^n$.
Note that since $u$ and $v$ take finite values on $x_i$ and $y_j$, the initial
ideals $\init_{w^1} I$ and $\init_{w^2} J$ are monomial free. Denote $k[X,Y]=
k[X_1,\ldots,X_m,Y_1,\ldots,Y_n]$, $w=(w^1,w^2)\in\R^{m+n}$, and let $I\otimes 1$ 
and $1\otimes J$ be the extensions of the ideals $I$ and $J$ to $k[X,Y]$ 
respectively. Then, $\init_w(I\otimes 1)=\init_{w^1}(I)\otimes 1$, and similarly
for $J$. Indeed, consider $f=\sum f_ib_i\in I\otimes 1$, where $f_i\in k[X,Y]$,
$b_i\in I$. Write $f$ as a polynomial in $Y$: $f=\sum_N b_N Y^N$, where
$b_N=b_N(X)\in I$, $b_N\ne 0$. Consider the combination
\begin{equation}\label{E:comb1}
\sum_{w^1(b_N)+\langle w^2,N\rangle\text{ is minimal}} \init_{w^1}(b_N) Y^N.
\end{equation}
If it is $0$, then this holds identically in $Y$, and thus $\init_{w^1}(b_N)=0$
for all $b_N$ involved in \eqref{E:comb1}. But this contradicts the assumption
$b_N\ne 0$. It follows that the $w$-initial form of $f$ is a $k[X,Y]$-linear
combination of initial forms of $b_N\in I$.

\emph{Step 3.} Our next claim is that the initial ideal $\init_w(I\otimes 1+
1\otimes J)$ coincides with $\init_{w^1}(I)\otimes 1+1\otimes\init_{w^2}(J)$.
The inclusion $\supseteq$ is clear, so let us prove the inclusion $\subseteq$.
Consider an element
$$h=\sum_i f_ib_i + \sum_j g_jc_j \in I\otimes 1+1\otimes J,$$
where $f_i,g_j\in k[X,Y]$, $b_i\in I$, $c_j\in J$. We can assume that $b_i$
generate the ideal $I$, $c_j$ generate the ideal $J$, the initial forms
$\init_{w^1}(b_i)$ generate the initial ideal $\init_{w^1}I$, and 
$\init_{w^2}(c_j)$ generate $\init_{w^2}J$. If the initial forms do not
cancel, that is the combination
$$\sum \init_w(f_i)\init_{w^1}(b_i) + \sum \init_w(g_j)\init_{w^2}(c_j)$$
is not $0$, then it is the $w$-initial form $h_w$ of $h$, and there is nothing
to prove. If this combination is $0$, it follows that 
$$\sum \init_w(f_i)\init_{w^1}(b_i),\: \sum \init_w(g_j)\init_{w^2}(c_j)\in
\init_w(I\otimes 1)\cap\init_w(1\otimes J),$$
the last intersection being equal to the product $\init_w(I\otimes 1)\cdot
\init_w(1\otimes J)$ by Corollary~\ref{C:product} below. Thus, using Step 2,
$\sum \init_w(f_i)\init_{w^1}(b_i)=-\sum \init_w(g_j)\init_{w^2}(c_j)$ can
be written as $\sum h_{ij}\init_{w_1}(b_i)\init_{w^2}(c_j)$, $h_{ij}\in k[X,Y]$.
Then, consider
$$h=\sum f_ib_i + \sum g_jc_j \pm \sum h_{ij}b_ic_j =
\sum_i (f_i-\sum_j h_{ij}c_j)b_i + \sum_j (g_j+\sum_i h_{ij}b_i)c_j.$$
The $w$-order of $h$ is fixed, while $w$-orders of expressions in
parenthesis on the right have increased comparing with the $w$-orders of
$f_i$ and $g_j$. In this way we eventually rewrite $h$ in the form where
$w$-initial forms do not cancel, thus, $w$-initial form of $h$ is a combination
of $w$-initial forms of elements of $I$ and $J$.

\emph{Step 4.} Now we show that the ideal $\init_w(I\otimes 1+1\otimes J)$ 
is monomial free. Suppose that a monomial $X^MY^N$ can be represented as
\begin{equation}\label{E:comb2}
X^MY^N=\sum f_i\init_{w^1}(b_i)+\sum g_j\init_{w^2}(c_j),
\end{equation}
where $f_i$, $g_j\in k[X,Y]$, $b_i\in I$, $c_j\in J$. Since $\init_{w^1}I$
and $\init_{w^2}J$ are monomial free, they have points $x^0\in(\bar{k}^*)^m$,
$y^0\in(\bar{k}^*)^n$ respectively. Substituting the point $(x^0,y^0)\in
(\bar{k}^*)^{m+n}$ to \eqref{E:comb2}, we get a contradiction.

\emph{Step 5.} Finally, we construct a valuation $w$ on $B\bigotimes_k C$. First, 
consider a monomial valuation $w'$ on $k[X,Y]$ defined by $w'(X_i)=u(x_i)$,
$i=1,\ldots,m$, $w'(Y_j)=v(y_j)$, $j=1,\ldots,n$. Let $S$ be a semigroup
generated by all the monomials $aX^MY^N$, $a\in k$, $M\in\Z_{\geq 0}^{m}$,
$N\in\Z_{\geq 0}^{n}$. By Step 4 and Bergman's Theorem~\ref{T:Bergman}, $w'$ 
can be pushed forward to a valuation $w$ on
$$k[X,Y]/(I\otimes 1+1\otimes J)\simeq k[X]/I \bigotimes_k k[Y]/J \simeq
B\bigotimes_k C,$$
and $w$ has all the required properties.
\end{proof}

The following two results were needed for the proof of Lemma~\ref{L:fgtensor}.
We use the notation introduced in Step 2 of the proof of Lemma~\ref{L:fgtensor}
and assume some familiarity of the reader with Gr{\"o}bner bases.

\begin{lemma}\label{L:Groebner}
Let $I\subset k[X]$ be an ideal, and $\{b_1,\ldots,b_s\}$ a Gr{\"o}bner basis of
$I$ with respect to some monomial ordering $\leq$ on $k[X]$. Then $\{b_1,
\ldots,b_s\}$ is a Gr{\"o}bner basis for $I\otimes 1\subset k[X,Y]$ with respect
to any monomial ordering on $k[X,Y]$ that restricts to the ordering $\leq$
on $k[X]$.
\end{lemma}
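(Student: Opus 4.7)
The plan is to invoke Buchberger's criterion. Let $\preceq$ be any monomial order on $k[X,Y]$ whose restriction to $k[X]$ is $\leq$. The starting observation is that for every polynomial $f\in k[X]$, the leading monomial $\init_\preceq(f)$ coincides with $\init_\leq(f)$: the leading term of $f$ under $\preceq$ is a monomial in the $X$-variables only, and $\preceq$ agrees with $\leq$ on such monomials. In particular, this applies to each $b_i$ and hence also to the S-polynomials $S(b_i,b_j)$, which therefore live in $k[X]$ and are literally the same polynomials whether computed in $k[X]$ or in $k[X,Y]$.

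First I would check that $\{b_1,\ldots,b_s\}$ generates the ideal $I\otimes 1$ inside $k[X,Y]$; this is clear because $b_1,\ldots,b_s$ generate $I$ in $k[X]$, and $I\otimes 1$ is by definition the extension of $I$ to $k[X,Y]$. Then, since $\{b_1,\ldots,b_s\}$ is a Gröbner basis in $k[X]$, each S-polynomial admits a standard representation
\[
S(b_i,b_j)=\sum_\ell h_\ell^{(ij)} b_\ell,\qquad h_\ell^{(ij)}\in k[X],
\]
with $\init_\leq(h_\ell^{(ij)} b_\ell)\leq \init_\leq(S(b_i,b_j))$ for every $\ell$. Because $\preceq$ restricts to $\leq$ on $k[X]$ and every term of the above identity lies in $k[X]$, this identity is also a standard representation with respect to $\preceq$ in $k[X,Y]$. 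Hence every $S(b_i,b_j)$ reduces to zero modulo $\{b_1,\ldots,b_s\}$ with respect to $\preceq$, and Buchberger's criterion gives the conclusion.

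I do not expect a real obstacle: the entire content of the lemma is the compatibility between the original order and its extension, which is built into the hypothesis. The only thing to be careful about is to state the reduction in the form of a standard representation (so that the inequality on leading monomials is preserved) rather than in the form of a particular reduction sequence, since sequences of reduction steps could in principle look different in $k[X,Y]$ when intermediate terms involving $Y$ are allowed — but standard representations transfer verbatim.
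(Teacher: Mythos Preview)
Your proof via Buchberger's criterion is correct. The key observations---that the leading terms of the $b_i$, and hence the $S$-polynomials and their standard representations, are unchanged when passing from $(k[X],\leq)$ to $(k[X,Y],\preceq)$---are exactly right, and the version of Buchberger's criterion phrased in terms of standard representations applies cleanly.

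The paper takes a different and slightly more direct route: rather than checking $S$-polynomials, it verifies the defining property of a Gr\"obner basis head-on. Given any $f\in I\otimes 1$, the paper writes $f=\sum_N c_N(X)\,Y^N$ with each $c_N\in I$; the leading monomial of $f$ under $\preceq$ then lies in a single summand $c_N(X)Y^N$, and (using that multiplication by $Y^N$ is $\preceq$-monotone and that $\preceq$ restricts to $\leq$) equals $\init_\leq(c_N)\cdot Y^N$. Since $c_N\in I$, this is divisible by some $\init_\leq(b_j)=\init_\preceq(b_j)$. The paper's argument thus avoids any appeal to Buchberger's criterion and works straight from the definition, at the cost of one extra observation (that the leading monomial of $f$ sits in a single $Y$-homogeneous piece). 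Your approach trades that observation for the standard machinery of Buchberger; both are short, and neither has a real advantage over the other here.
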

\begin{proof}
For any $f=\sum f_i(X,Y)b_i(X)$ write $f=\sum_N(\sum a_j(X)b_j(X))Y^N$ as
a polynomial in $Y$. The leading monomial of $f$ is present only in one of the
expressions $(\sum a_jb_j)Y^N$, thus it is divisible by the leading monomial
of one of the $b_j$.
\end{proof}

\begin{corollary}\label{C:product}
Let $I\subset k[X]$ and $J\subset k[Y]$ be ideals. Then
$$(I\otimes 1)\cap(1\otimes J)=(I\otimes 1)\cdot(1\otimes J)$$
in $k[X,Y]$.
\end{corollary}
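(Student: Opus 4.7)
The inclusion $\supseteq$ is automatic since for any two ideals $\mathfrak{a},\mathfrak{b}$ in a commutative ring $\mathfrak{a}\mathfrak{b}\subseteq\mathfrak{a}\cap\mathfrak{b}$. For the other direction I plan to observe that under the canonical identification $k[X]\otimes_k k[Y]=k[X,Y]$, the product ideal $(I\otimes 1)\cdot(1\otimes J)$ is precisely the image of the submodule $I\otimes_k J$, since each generator satisfies $(a\otimes 1)(1\otimes b)=a\otimes b$. Thus the problem reduces to showing that any $f\in(I\otimes 1)\cap(1\otimes J)$ actually lies in $I\otimes_k J\subseteq k[X]\otimes_k k[Y]$.

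The cleanest tool for this is flatness over the field $k$. I would consider the commutative diagram of short exact sequences obtained by tensoring $0\to J\to k[Y]\to k[Y]/J\to 0$ once with $I$ and once with $k[X]$ and connecting the two rows by the vertical maps induced by $I\hookrightarrow k[X]$. Both rows are exact and all vertical arrows are injective because every $k$-module is flat. A short diagram chase then shows that an element of $I\otimes_k k[Y]=I\otimes 1$ vanishes in $k[X]\otimes_k(k[Y]/J)=k[X,Y]/(1\otimes J)$ iff it already vanishes in $I\otimes_k(k[Y]/J)$, iff it comes from the kernel $I\otimes_k J$ of the top horizontal map; this delivers the desired inclusion.

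An alternative more in the spirit of the preceding Lemma~\ref{L:Groebner} is to fix Gr\"obner bases $\{b_i\}$ for $I$ and $\{c_j\}$ for $J$ and extend their monomial orderings to a block ordering on $k[X,Y]$. By Lemma~\ref{L:Groebner} one has $\init(I\otimes 1)=\init(I)\otimes 1$ and $\init(1\otimes J)=1\otimes\init(J)$, and for monomial ideals in disjoint variable sets intersection coincides with product; so the initial ideals of $(I\otimes 1)\cap(1\otimes J)$ and $(I\otimes 1)\cdot(1\otimes J)$ are both squeezed between $(\init(I)\otimes 1)\cdot(1\otimes\init(J))$ and itself and therefore agree, and two nested ideals with equal initial ideals must be equal by the standard Macaulay basis argument. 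The only mild obstacle in either approach is bookkeeping---tracking flatness across the diagram in the first route, or verifying the combinatorial fact that intersection equals product for monomial ideals in disjoint variables in the second---but no substantive difficulty arises.
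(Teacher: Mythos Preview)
Both routes you sketch are correct. Your second (Gr\"obner) approach is essentially the paper's own argument: the paper fixes Gr\"obner bases $\{b_i\}$ for $I$ and $\{c_j\}$ for $J$, invokes Lemma~\ref{L:Groebner} to see that the leading monomial of any $f$ in the intersection is divisible both by some $\init(b_i)$ and by some $\init(c_j)$, hence---by disjointness of the variable sets---by $\init(b_ic_j)$, and then concludes by the division algorithm. Your packaging via ``two nested ideals with the same initial ideal coincide'' is just the standard abstraction of that same reduction, together with the chain $\init(I)\cdot\init(J)\subseteq\init(IJ)\subseteq\init(I\cap J)\subseteq\init(I)\cap\init(J)=\init(I)\cdot\init(J)$.

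Your first (flatness) route is a genuinely different and cleaner proof. It uses nothing beyond the fact that every module over a field is flat, avoids Gr\"obner bases entirely, and would go through verbatim over any base ring for which the relevant modules ($I$, $k[X]$, $k[Y]$, $k[Y]/J$) are flat. The paper presumably takes the Gr\"obner route because Lemma~\ref{L:Groebner} is already in hand and the surrounding section is phrased throughout in terms of initial forms; your flatness argument would serve equally well and is arguably more transparent for this particular corollary.
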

\begin{proof}
Fix a monomial ordering on $k[X,Y]$ and Gr{\"o}bner bases $\{b_i\}$ for $I$
and $\{c_j\}$ for $J$. Take $f\in(I\otimes 1)\cap(1\otimes J)$. By 
Lemma~\ref{L:Groebner}, we conclude that the leading monomial $aX^MY^N$, $a\in k$,
of $f$ is divisible by the leading monomial of one of $b_i$ and of one of $c_j$. 
But the leading monomial of each $b_i$ is coprime to the leading monomial of each 
$c_j$, thus $aX^MY^N$ is divisible by the leading monomial of some of the 
products $b_i(X)c_j(Y)$. This implies the corollary.
\end{proof}

Let us continue the proof of Theorem~\ref{T:tensor}. Now we consider the 
general case of the tensor product of $A$-algebras $B$ and $C$, not necessarily
finitely generated over $A$. It is again possible to reduce to the case when 
$A=k$ is a field, and this will be assumed in the sequel. Let us recall the 
construction of the tensor product. Denote by $B^\bullet$ and $C^\bullet$ the
multiplicative semigroups of the rings $B$ and $C$ respectively, and consider
the direct product of semigroups $S'=B^\bullet\times C^\bullet$. Then the
tensor product $B\bigotimes_k C$ can be identified with the quotient $k[S']/T$
of the semigroup algebra $k[S']$ by the ideal $T$ generated by all relations of
the form
\begin{align}\label{E:basis}
&(ax,y)-a(x,y),\: (x,ay)-a(x,y), \\ \notag
(x'+x'',y)- &(x',y)-(x'',y),\: (x,y'+y'')-(x,y')-(x,y''),
\end{align}
where $a\in k$, $x,x',x''\in B$, $y,y',y''\in C$. Note that these expressions 
generate $J$ not only as an ideal of $k[S']$ but also as a vector space 
over $k$.

The valuations $u$ and $v$ of $B$ and $C$ are semigroup homomorphisms
$u\colon B^\bullet\to \Ri$, $v\colon C^\bullet\to \Ri$. The rule 
$p(x,y)=u(x)+v(x)$ defines a semigroup homomorphism 
$p\colon S'\to \Ri$. Furthermore, we extend $p$ to a pseudovaluation on the 
semigroup algebra $k[S']$. For $f=\sum a_s s \in k[S']$, where $a_s\in k$, 
$s\in S'$, we set
$$p(f)=\min_{s\in S'} (u(a_s)+p(s))$$
(we could write $v(a_s)$ instead of $u(a_s)$). Let $S$ be a subsemigroup of
$k[S']$ generated by all the monomials $a(x,y)$, $a\in k$, $a\ne 0$, 
$(x,y)\in S$. It is clear that $p|_S$ is a semigroup homomorphism. 
Next we are going to show that there exists a valuation $\bar{w}$ on $k[S']$ 
that coincides with $p$ on $S$ and can be pushed forward to a valuation $w$ of 
the quotient $k[S']/T$. It suffices only to check the conditions of 
Theorem~\ref{T:Bergman}, i.e., if $f\in T$ and $s\in S$ is a monomial, then 
$p(f-s)\leq p(f)$. Represent $f$ as a $k$-linear combination $\sum a_i r_i$ of 
expressions $r_i$ of the form \eqref{E:basis}, and $s$ as $a(x_0,y_0)$, 
where $a\in k$, $x_0\in B$, $y_0\in C$. Let $x_1,\ldots,x_m$ be all the 
elements of $B$ and $y_1,\ldots,y_n$ all the elements of $C$ that are present 
in the monomials of $r_i$. Consider the finitely generated rings 
$B'=k[x_0,x_1,\ldots,x_m]$ and $C'=k[y_0,y_1,\ldots,y_n]$ and restrictions 
$u'$ and $v'$ to $B'$ and $C'$ respectively. If we represent $B'\bigotimes_k C'$ 
as a quotient of a semigroup algebra $k[(B')^\bullet\times(C')^\bullet]$, 
then $f$ can be considered as an element of the corresponding ideal $T'\subset k[(B')^\bullet\times(C')^\bullet]$. The pseudovaluation $p$ also naturally 
restricts to this semigroup algebra. On the other hand, we know by 
Lemma~\ref{L:fgtensor} that $u'$ and $v'$ extend simultaneously to 
$B'\bigotimes_k C'$, thus, $p$ achieves its minimum on the monomials of $f$ 
at least twice. This shows that Theorem~\ref{T:Bergman} applies to $p$, 
$S$ and $T$. We get a valuation $w$ on
$$k[S']/T\simeq B\bigotimes_k C$$
which is a simultaneous extension of $u$ and $v$. This finishes the second proof 
of Theorem~\ref{T:tensor}.

We are ready to describe the construction of universal valued fields. Now,
let $k$ be a ring with a real valuation $\nu$ and $R$ an arbitrary 
commutative $k$-algebra with unity. Denote by $\V(R,k)$ the set of all 
real ring valuations of $R$ that extend $\nu$.

\begin{theorem}\label{T:ufield}
Given $k$, $R$, and $\nu$, there exist a $k$-algebra $K$ and a valuation $\mu$ on 
$K$ such that $\mu|_k=\nu$, and for any $v\in\V(R,k)$ there exists a morphism 
$f_v\colon R\to K$ of $k$-algebras such that the valuation $v$ on $R$ is induced 
by the valuation $\mu$ on $K$ via the morphism $f_v$. 
\end{theorem}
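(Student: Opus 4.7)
The plan is to take a huge tensor product over $k$, with one factor for each valuation in $\V(R,k)$, and to use Theorem~\ref{T:tensor} to assemble the given valuations into a single valuation on this tensor product. For each $v\in\V(R,k)$ I write $R_v:=R$ regarded as a valued $k$-algebra with valuation $v$; the required morphism $f_v$ will then be the inclusion of $R_v$ as one of the tensor factors, which automatically satisfies $\mu\circ f_v=v$. Note that $\V(R,k)$ is a set, since each valuation is a function $R\to\Ri$.

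To construct the tensor product together with its valuation, I would invoke Zorn's lemma. Let $\mathcal{P}$ be the poset whose elements are pairs $(J,w)$, where $J\subseteq\V(R,k)$ and $w$ is a valuation on the $k$-algebra $K_J:=\bigotimes_{v\in J} R_v$ (defined as the directed colimit of the finite tensor subproducts), such that for each $v\in J$ the $v$-th factor inclusion $R_v\hookrightarrow K_J$ pulls $w$ back to $v$. Order $\mathcal{P}$ by declaring $(J_1,w_1)\le(J_2,w_2)$ iff $J_1\subseteq J_2$ and the natural inclusion $K_{J_1}\hookrightarrow K_{J_2}$ pulls $w_2$ back to $w_1$. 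The pair $(\emptyset,\nu)$ with $K_\emptyset=k$ shows that $\mathcal{P}$ is nonempty, and any chain in $\mathcal{P}$ has an upper bound obtained by taking the union of the index sets and defining the valuation on the colimit via compatibility of the chain.

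By Zorn's lemma $\mathcal{P}$ has a maximal element $(J^*,w^*)$. If $J^*\subsetneq\V(R,k)$, pick any $v\in\V(R,k)\setminus J^*$; Theorem~\ref{T:tensor} applied to $A=k$, $B=K_{J^*}$ with valuation $w^*$, and $C=R_v$ with valuation $v$ (whose restrictions to $k$ both equal $\nu$) produces a valuation on $K_{J^*}\otimes_k R_v=K_{J^*\cup\{v\}}$ extending both, contradicting maximality. Hence $J^*=\V(R,k)$, and taking $K=K_{J^*}$, $\mu=w^*$, and $f_v$ the $v$-th factor inclusion completes the proof. The main technical burden lies in realising the infinite tensor product as a colimit of finite ones and checking that the ordering is closed under chains; the genuinely nontrivial content of simultaneously extending valuations under $\otimes_k$ has already been encapsulated in Theorem~\ref{T:tensor}.
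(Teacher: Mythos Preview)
Your proposal is correct and follows essentially the same route as the paper: form the (restricted) tensor product $\bigotimes_{v\in\V(R,k)} R$, apply Zorn's lemma to the poset of pairs $(J,w)$ with $J\subseteq\V(R,k)$ and $w$ a simultaneous extension on $K_J$, and use Theorem~\ref{T:tensor} to show a maximal element must have $J=\V(R,k)$. The only cosmetic differences are that the paper takes singletons rather than $(\emptyset,\nu)$ to witness nonemptiness and cites Eisenbud for the infinite tensor product rather than spelling it out as a colimit.
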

\begin{proof}
Consider a $k$-algebra
$$K=\bigotimes_{v\in\V(R,k)} R,$$
the restricted tensor product over $k$ of the $k$-algebra $R$ with itself, one
copy for each $v\in\V(R,k)$, see \cite[p. 713, Proposition~A6.7b]{Eis}. Let us
show that there exists a valuation $\mu$ on $K$ that is a simultaneous
extension of all the valuations $v$ of $R$. This is a consequence of
Theorem~\ref{T:tensor} and Zorn's lemma. Indeed, for each subset $V\subseteq
\V(R,k)$ we have a natural homomorphism of $k$-algebras
$$K_V=\bigotimes_{v\in V} R \to K,$$
and, for $U\subseteq V$, a natural homomorphism $K_U\to K_V$.
Let $\V$ be the family of all pairs $(V,\mu_V)$, where $V\subseteq\V(R,k)$ and
$\mu_V$ is a valuation on $K_V$ extending all the valuations $v$ of $R$, 
$v\in V$. The family $\V$ is nonempty since it contains all the pairs $(\{v\},v)$
for one-element subsets of $\V(R,k)$. It is also ordered by the following order
relation: $(U,\mu_U)\leq (V,\mu_V)$ if and only if $U\subseteq V$ and 
$(\mu_V)|_{K_U}=\mu_U$. Then, the conditions of Zorn's lemma are satisfied, thus
$\V$ contains maximal elements. But by Theorem~\ref{T:tensor} such a maximal
element must coincide with $(\V(R,k),\mu)$ for some valuation $\mu$ on $K$.
Now, let $f_v$ be the natural homomorphism
$$R\to \bigotimes_{v\in\V(R,k)} R= K.$$
Then, $\mu$ induces the valuation $v$ on $R$ via $f_v$.
\end{proof}

\begin{remark}
In the conditions of Theorem~\ref{T:ufield}, the valuation $\mu$ on $K$ naturally
extends to $K/\home(\mu)$, to its field of fractions $Q(K/\home(\mu))$, and,
further, to its algebraic closure $\overline{Q(K/\home(\mu))}$. Thus, the 
universal $k$-algebra $K$ can be assumed to be a domain or even an algebraically
closed field.
\end{remark}

It would be interesting to give an explicit construction of universal valuation 
fields under some reasonable restrictions on the algebra $R$. It may be that such 
a universal valued field depends only on the field $k$ and the valuation $\nu$ 
and works for all $k$-algebras of a certain class. The next proposition gives an
example of such phenomenon. 

\begin{proposition}\label{P:ufforfg}
Let $k$ be an algebraically closed trivially valued field of characteristic $0$, 
and $k(x)=k(x_1,\ldots,x_n)$ the field of rational functions in $n$ variables
with coefficients in $k$. Let $K=\overline{k(x)}(t^\R)$ be the Hahn series field
with coefficients in the algebraic closure of $k(x)$. Then $K$ serves as a 
universal valued field for all finitely generated $k$-algebras $R$ of 
dimension $d\leq n$.
\end{proposition}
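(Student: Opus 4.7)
The plan is to reduce to embedding a finitely generated valued field into $K$, and then to appeal to a classical Kaplansky-style embedding theorem. Given $v\in\V(R,k)$, set $\mathfrak{p}=\home(v)$, a prime ideal. Then $v$ descends to a rank-one valuation on $R/\mathfrak{p}$ and extends uniquely to its fraction field $F$. Since $\dim R\leq n$, we have $\mathrm{trdeg}(F/k)=\dim(R/\mathfrak{p})\leq n$. Producing a $k$-embedding of valued fields $g\colon(F,v)\hookrightarrow(K,\mu)$ is sufficient: the composition $R\twoheadrightarrow R/\mathfrak{p}\hookrightarrow F\xrightarrow{g}K$ is the required $f_v$, since elements of $\mathfrak{p}$ are sent to $0$ (of $\mu$-value $+\infty$) and the valuation is preserved elsewhere.

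Next, I would bound the invariants of $v$. Let $\Gamma\subseteq\R$ be the value group and $k_v$ the residue field of $v$ on $F$. Abhyankar's inequality yields $\mathrm{rat.rank}(\Gamma)+\mathrm{trdeg}(k_v/k)\leq\mathrm{trdeg}(F/k)\leq n$, and in particular $\mathrm{trdeg}(k_v/k)\leq n$. Because $k$ is algebraically closed of characteristic $0$, $k_v$ admits a $k$-embedding $\iota\colon k_v\hookrightarrow\overline{k(x_1,\ldots,x_n)}=\overline{k(x)}$ (embed each finitely generated subextension and take a direct limit, which is possible because $\overline{k(x)}$ is algebraically closed); this is exactly where the hypothesis $d\leq n$ is used. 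The value group $\Gamma$ already lives in $\R$.

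The heart of the argument is embedding $(F,v)$ into $K$. The field $K=\overline{k(x)}((t^\R))$ is spherically complete (as any Hahn series field), algebraically closed by Maclane's theorem (since $\overline{k(x)}$ is algebraically closed of characteristic $0$ and $\R$ is divisible), and of equicharacteristic $0$, with residue field $\overline{k(x)}\supseteq\iota(k_v)$ and value group $\R\supseteq\Gamma$. A concrete route: first extend $v$ to the algebraic closure $\overline{F}$, whose residue field becomes $\overline{k_v}\hookrightarrow\overline{k(x)}$ and whose value group becomes $\Q\Gamma\subseteq\R$; pass to a maximal immediate extension $\widehat{F}$ of $\overline{F}$; invoke Kaplansky's uniqueness theorem for maximal immediate extensions in equicharacteristic $0$ to identify $\widehat{F}$ with the Hahn subfield $\overline{k_v}((t^{\Q\Gamma}))\subseteq K$; finally restrict to $F\subseteq\overline{F}$ to extract $g$.

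The main obstacle is this last step: one must arrange that the Kaplansky embedding lands inside the \emph{prescribed} field $K$, not merely in some abstract maximally complete valued field of the right kind. This requires fixing the compatible embeddings $\iota\colon k_v\hookrightarrow\overline{k(x)}$ and $\Gamma\hookrightarrow\R$ at the outset, and then threading them through the uniqueness of the maximal immediate extension to pin down its image inside $K$. Once this bookkeeping is done, the rest of the argument is formal.
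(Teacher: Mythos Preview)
Your approach is essentially the paper's: reduce to the fraction field of $R/\home(v)$, bound the transcendence degree of the residue field $k_v$ via Abhyankar to embed it into $\overline{k(x)}$, and then use Kaplansky's theory of maximal immediate extensions to embed the valued field into the corresponding Hahn subfield of $K$. The paper compresses your final step into a one-line citation of \cite[Theorem~6]{Kap}, whereas you unpack the passage through $\overline{F}$ and the uniqueness of its maximal immediate extension; the content is the same.
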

\begin{proof}
Let $R$ be a finitely generated $k$-algebra of dimension $d\leq n$. If $v$ is a
valuation of $R$ over $k$, then the residue field $k(v)$ of $v$ has transcendence
degree $r\leq d$ over $k$. Thus $k(v)$ and its algebraic closure can be embedded
to $\overline{k(x)}$. On the other hand, by \cite[Theorem~6]{Kap}, the field
$\overline{k(v)}(t^\R)$ is the maximal valued field with the residue field
$\overline{k(v)}$ and the value group $\R$. It follows that the valuation $v$ is 
induced by a homomorphism from $R$ to $\overline{k(v)}(t^\R)$. It remains to
note that each field of the form $\overline{k(v)}(t^\R)$ embeds to $K$.
\end{proof}

Proposition~\ref{P:ufforfg} suffers obviously from the lack of explicit description
of the algebraic closure $\overline{k(x)}$ of the field of rational functions
in several variables.

\section{Weak universality of the field of Puiseux series}\label{S:wufield}

As in the Introduction, we denote by $K=\bar{k}((t^\Q))=
\bigcup_{N\geq 1}\bar{k}((t^{1/N}))$ the field of Puiseux series with 
coefficients in an algebraically closed field $\bar{k}$, and by $K'=
\bar{k}((t^\R))$ the field of Hahn series, i.e., the set of all 
formal sums
$$a(t)=\sum_{m\in \R} a_m t^m,$$
where $a_m\in\bar{k}$ and $\{m\,|\,a_m\ne 0\}$ is a well-ordered subset of $\R$. 
Let $\mu$ be the natural valuation of $K$ ($K'$).
The ring of Puiseux (respectively Hahn) series $\bar{k}[[t^\Q]]$ 
(resp. $\bar{k}[[t^\R]]$) is the subring of $K$ (resp. $K'$) consisting of the 
series of nonnegative valuation. It is known that if $\bar{k}$ has 
characteristic $0$, then the field $K$ of Puiseux series is algebraically closed 
(\cite{Cohn}); if $\bar{k}=\C$ is the field of complex numbers, this is the 
classical Newton-Puiseux Theorem. The field $K'$ of Hahn series 
is always algebraically closed (\cite[Theorem~1]{MacLane}). 

\begin{theorem}\label{T:wucomplete}
Let $R$ be a complete Noetherian local ring containing a field $k$ isomorphic
to the residue field of $R$. Let $v$ be a local valuation of $R$ and $x_1,\ldots,
x_n$ a finite collection of elements of the maximal ideal of $R$. Then there 
exists a homomorphism $f\colon R\to \mathcal{O}'=\bar{k}[[t^\R]]$ to the ring of 
Hahn series with coefficients in $\bar{k}$ such that for all 
$i=1,\ldots,n$, $v(x_i)=\mu(f(x_i))$. Moreover, if the characteristic of $k$ is 
$0$, the elements $x_1,\ldots,x_n$ analytically generate $R$, and $v(x_i)\in\Q$ 
for all $i=1,\ldots,n$, then there exists a homomorphism $f\colon R\to 
\mathcal{O}=\bar{k}[[t^\Q]]$ to the ring of Puiseux series with coefficients in 
$\bar{k}$ with the same property.
\end{theorem}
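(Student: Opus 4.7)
The plan is to follow the scheme of \cite{JMM} for tropical lifting, adapted to the local setting, descending on the invariant $d-r$ where $d=\dim R$ and $r$ is the rational rank of the value group of $v$, and terminating at the Abhyankar case $d=r$ (rather than $\dim=0$ as in the non-local theory). By the Cohen structure theorem, write $R\cong k[[X_1,\ldots,X_N]]/I$, using the same letters for the $X_i$ and their images in $R$. After enlarging $\{x_1,\ldots,x_n\}$ to contain $X_1,\ldots,X_N$, sending the $X_i$ with $v(X_i)=+\infty$ to $0$, and replacing $R$ by $R/\home(v)$, one may assume that $I$ is prime and that all $w_i=v(X_i)$ are finite. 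The vector $w=(w_1,\ldots,w_N)$ lies in $\ptrop(I)$, and the goal reduces to constructing Hahn series $a_i(t)\in\mathcal{O}'$ with $\mu(a_i)=w_i$ and $g(a_1,\ldots,a_N)=0$ for every $g\in I$.

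\emph{Base case} $d=r$. Abhyankar's inequality forces $k(v)/k$ to be algebraic. I would choose a system of analytic parameters $y_1,\ldots,y_r\in R$ with $v(y_1),\ldots,v(y_r)$ $\Q$-linearly independent in $\R$; then $k[[y_1,\ldots,y_r]]\subset R$ is a complete regular local subring over which $R$ is a finite module, and the map $g_0\colon k[[y_1,\ldots,y_r]]\to\mathcal{O}'$ sending $y_i\mapsto t^{v(y_i)}$ is an injective, valuation-preserving homomorphism (well-ordered support of the image follows because each $v(y_i)>0$). Since $K'=\bar{k}((t^\R))$ is algebraically closed by \cite{MacLane}, $g_0$ extends to a $k$-algebra homomorphism $f\colon R\to K'$; integral closedness of $\mathcal{O}'$ in $K'$ together with integrality of $R$ over $k[[y]]$ forces $f(R)\subset\mathcal{O}'$, and a direct computation then yields $\mu(f(x_i))=v(x_i)$ for every $i$.

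\emph{Inductive step} $d>r$. Since $w\in\ptrop(I)$, the initial ideal $\init_w(I)\subset k[X_1,\ldots,X_N]$ contains no monomial, so $V(\init_w I)$ meets the torus $(\bar{k}^*)^N$ at some point $(c_1,\ldots,c_N)$, which I would pick on an irreducible component of $V(\init_w I)$ compatible with the residue data of $v$. The substitution $X_i\mapsto c_i t^{w_i}(1+\xi_i)$ transforms the lifting problem for $(R,v,w)$ into a lifting problem for a new complete local ring $R'$ with a local valuation $v'$ for which the invariant $\dim R'-r(v')$ is strictly smaller (either the rational rank of $v'$ has strictly increased or $\dim R'<d$). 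Applying the induction hypothesis to $R'$ produces a homomorphism $R'\to\mathcal{O}'$, and composing with the substitution yields the desired $f$. For the ``moreover'' part, the characteristic-zero hypothesis together with $v(x_i)\in\Q$ and the analytic generation of $R$ by $x_1,\ldots,x_n$ bounds the denominators of all exponents produced in the recursion, so the image lands in $\mathcal{O}=\bar{k}[[t^\Q]]$; algebraic closedness of $K=\bar{k}((t^\Q))$ in characteristic zero \cite{Cohn} underwrites each Hensel-type lifting.

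The main obstacle is this inductive step: one must verify that the substitution genuinely decreases $d-r$, and that the recursively produced exponents of the correction series $\xi_i(t)$ form a well-ordered subset of $\R$, so that each $f(X_i)$ is a bona fide Hahn series. The well-ordering requirement is precisely why one must in general work over $\bar{k}[[t^\R]]$ rather than a smaller series ring, and why the Puiseux variant needs the additional hypotheses on the $x_i$ to control denominators.
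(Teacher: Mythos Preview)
Your overall architecture---descent on $d-r$ with the base case $d=r$ handled by a Noether-type normalization and embedding into Hahn series---matches the paper. The base case is essentially right, though your ``direct computation'' elides a point: an extension of $g_0$ to $R$ is not unique, and different extensions induce different valuations on $R$; you need the one inducing the given $v$, which the paper obtains from the classification of valuation extensions in an algebraic field extension (Lang, Ch.~XII, \S3).

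The genuine gap is in the inductive step. Your substitution $X_i\mapsto c_it^{w_i}(1+\xi_i)$ is the JMM maneuver, and you yourself flag its two obstacles---that $d-r$ strictly drops, and that the recursively produced exponents form a well-ordered set---without resolving either. The paper avoids both by a different idea: since only the finitely many values $w_i=v(x_i)$ matter, one may \emph{replace} $v$ by any other local valuation taking those same values on the $x_i$. Concretely, one constructs a $w$-homogeneous polynomial $f$ lying outside every minimal prime of $\init_w I$ but vanishing at a chosen torus point of $V(\init_w I)$, checks that $\init_w(I+(f))=\init_w I+(f)$ remains monomial free, and then applies Bergman's theorem to produce a local valuation $\bar v$ on $R/(f)$ with $\bar v(x_i)=w_i$. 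This drops $\dim$ by one while $r$ is unchanged, so after finitely many such steps one reaches the Abhyankar case and builds the homomorphism $f\colon R\to\mathcal{O}'$ in a single stroke. There is no infinite recursion, hence no well-ordering issue, and no intermediate passage to a base ring involving $t$.
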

\begin{proof}
\emph{Step 1.} First we reduce to the case when $k=\bar{k}$ is algebraically
closed, $R$ is a domain, and do some other preliminary reductions. Consider the 
tensor product $\overline{R}=R\bigotimes_k \bar{k}$. Note that $\overline{R}$ is 
integral over $R$, thus, there exists a prime ideal $\mathfrak{q}\subset
\overline{R}$ such that $\mathfrak{q}\cap R=\mathfrak{p}$, where $\mathfrak{p}=
\home(v)$. The quotient $\overline{R}/\mathfrak{q}$ remains integral over 
$R/\mathfrak{p}$. Hence the local valuation $v$ of $R/\mathfrak{p}$ extends to a 
valuation $\bar{v}$ of $\overline{R}/\mathfrak{q}$, and, as follows from the 
method of Newton polygon (see \cite[Chapitre VI, \S 4, exercise 11]{Bour}), 
$\bar{v}$ is nonnegative on $\overline{R}/\mathfrak{q}$. Passing again to the 
quotient by $\home(\bar{v})$, if necessary, we can assume that $\home(\bar{v})=
\{0\}$. Let $\mathfrak{n}$ be the prime ideal of $\overline{R}/\mathfrak{q}$ 
where $\bar{v}$ is positive. Since $\mathfrak{n}\cap R/\mathfrak{p} =
\mathfrak{m}$ is the maximal ideal of $R/\mathfrak{p}$, the ideal $\mathfrak{n}$ 
is also maximal. Then, consider the localization 
$(\overline{R}/\mathfrak{q})_\mathfrak{n}$ and its completion $R'$ with respect 
to $\mathfrak{n}$. The valuation $\bar{v}$ extends canonically to a local 
valuation $v'$ of $R'$ (see, e.g., \cite[Lemma~5.16]{PPS}). Finally, passing to 
the quotient of $R'$ by $\home(v')$, we can assume that $R'$ is a domain 
and $\home(v')=\{0\}$.

Recall that the \emph{rational rank} of a valuation $v$ on $R$ is the dimension
over $\Q$ of the $\Q$-vector subspace of $\R$ generated by the image of $v$.
Let $r'$ be the rational rank of $v$ and $d$ the Krull dimension of $R$. It 
follows from the Abhyankar inequality (\cite[Th{\'e}or{\`e}me~9.2]{Vaq}) that 
$r'\leq d$. If $r$ is the dimension of the $\Q$-vector subspace of $\R$ 
generated by $v(x_1),\ldots,v(x_n)$, then we have $r\leq r'\leq d$. By the Cohen 
structure theorem \cite[Theorem~7.7]{Eis}, the ring $R$ is a quotient of the ring 
$k[[X_1,\ldots,X_N]]$ of formal power series. Enlarging the finite set 
$\{x_1,\ldots,x_n\}$, if necessary, we can assume that the elements 
$x_1,\ldots,x_n$ analytically generate $R$, i.e., $N=n$ and a surjection from 
$k[[X_1,\ldots,X_n]]$ to $R$ can be chosen so that $X_i\mapsto x_i$, $i=1,\ldots,
n$. Clearly, there is no loss of generality if we also assume that none of
$x_i$ is $0$.

In the reduction steps described above, we performed the following actions
with the ring $R$: we took quotients, considered the tensor product with 
$\bar{k}$, localized at a maximal ideal, and took a completion. These actions 
could only decrease the dimension of $R$. Thus, if the elements $x_1,\ldots,x_n$ 
analytically generate the ring $R$ at the beginning, their images in $R'$ form a 
system of parameters for $R'$. Then, a system of analytical generators for $R'$ 
can be obtained from $x_1,\ldots,x_n$ by adjoining only a finite number of 
elements $x'_1,\ldots,x'_l\in R'$ \emph{integral} over the subring analytically 
generated by $x_1,\ldots,x_n$. It again follows from the method of Newton polygon
that if $v$ takes only rational values on $x_i$, then it also takes only rational
values on $x'_j$, $j=1,\ldots,l$. This shows that the reduction steps are 
applicable also in the ``moreover'' part of the theorem, i.e., we can assume 
that $R$ is a local complete Noetherian domain, $k$ is an algebraically closed
subfield of $R$ of characteristic $0$ and isomorphic to the residue field
of $R$, $x_1,\ldots,x_n$ analytically generate $R$, and $v$ takes only rational
values on $x_i$.

\emph{Step 2.} Now, we reduce to the case $r=d$, where $r$ is the dimension of
the $\Q$-vector subspace of $\R$ generated by $v(x_1),\ldots,v(x_n)$, $d=\dim R$.
Suppose that $r<d$. Then, represent $R$ as a quotient $k[[X_1,\ldots,X_n]]/I$, 
where $I$ is a prime ideal of $k[[X_1,\ldots,X_n]]$. The valuation $v$ induces a 
valuation of $k[[X_1,\ldots,X_n]]$; by abuse of notation, we denote this induced 
valuation also by $v$. Let $w$ be the vector $(w_1,\ldots,w_n)=(v(x_1),\ldots,
v(x_n))\in \R^n$. The proof of the following elementary lemma is left to the 
reader.

\begin{lemma}
Let $w=(w_1,\ldots,w_n)\in \R^n$ be a vector such that the real numbers 
$w_1,\ldots,w_n$ generate a $\Q$-vector subspace of $\R$ of dimension $r$ over 
$\Q$. Then there exists a unique minimal rational vector subspace $L_\Q(w)$ of
$\R^n$ (\emph{rational} means that $L_\Q(w)$ is defined by linear equations with
rational coefficients) such that $w\in L_\Q(w)$. The dimension of $L_\Q(w)$
equals $r$.
\end{lemma}

By general properties of valuations, the initial ideal $\init_w I$ is monomial
free. Consider the local Gr{\"o}bner fan of the ideal $I$ (\cite{BT}).
There exists a rational polyhedral cone $\sigma\subset\R^n$ such that $w$ is 
contained in the relative interior $\mathring{\sigma}$ of $\sigma$ and for all 
$w'\in\mathring{\sigma}$, $\init_{w'} I=\init_w I$. The intersection 
$\sigma\cap L_\Q(w)$ is also a rational polyhedral cone. Let us fix a vector 
$w'\in\mathring{\sigma}\cap L_\Q(w)$ with positive \emph{integral} coordinates.
Note that the initial ideal $\init_{w'}I=\init_w I$ is generated by 
$w$-homogeneous polynomials. Then, let $J=\init_{w'}I\cap k[x_1,\ldots,x_n]$ be 
the corresponding ideal of the polynomial ring. We have
$$k[[x_1,\ldots,x_n]]/\init_{w'}I \simeq (k[x_1,\ldots,x_n]/J)\sphat\,,$$
where the completion is taken with respect to the maximal ideal 
$(x_1,\ldots,x_n)$. Thus, by \cite[Corollary~11.19]{AM}, 
$$\dim k[[x_1,\ldots,x_n]]/\init_{w'}I=\dim k[x_1,\ldots,x_n]/J.$$
But by the standard flat degeneration argument (see, e.g., 
\cite[Lemma~11.10]{PPS}), $\dim k[[x_1,\ldots,x_n]]/\init_{w'}I=\dim R$, and,
since $I$ is supposed to be prime, all minimal associated primes of $\init_{w'}I$
have the same depth $d=\dim R$. We conclude that $\dim k[x_1,\ldots,x_n]/J=d$,
and, moreover, all minimal associated primes of $J$ also have depth $d$.

Since the ideal $J$ is monomial free, it has a point
$x^0\in (k^*)^n$. Choose an irreducible component $Z$ of the zero set of $J$
such that $x^0\in Z$. Let $\gamma_1,\ldots,\gamma_r$ be a basis of the
$\Q$-vector subspace of $\R$ generated by $w_1,\ldots,w_n$. Write
$$w_i=\sum_{j=1}^r w_{ij}\gamma_j,$$
where $i=1,\ldots,n$, $w_{ij}\in\Q$. Rescaling $\gamma_j$ if necessary, we can
even suppose that $w_{ij}$ are integral. By our assumptions, the rank of the
matrix $(w_{ij})_{i=1,j=1}^{n,m}$ is $r$. Since the ideal $J$ is generated by 
$w$-homogeneous polynomials, together with the point $x^0=
(x_{1}^{0},\ldots,x_{n}^{0})$ the variety $Z$ contains also all the points
$$(t_{1}^{w_{11}}\cdots t_{r}^{w_{1r}}x_{1}^{0},\ldots,
t_{1}^{w_{n1}}\cdots t_{r}^{w_{nr}}x_{n}^{0}),\quad (t_1,\ldots,t_r)\in k^r.$$
Thus we can make some $r$ of the coordinates of $x^0$, say, $x_{1}^{0},\ldots,
x_{r}^{0}$, to be equal to $1$. The dimension of the linear affine space $H$:
$x_1=\cdots=x_r=1$ is $n-r$, and the dimension of $Z$ is $d>r$. Hence
$\dim H\cap Z\geq 1$, and there exists at least one point $y^0\ne x^0$,
$y^0\in H\cap Z$. Take a polynomial $\tilde{f}(x_{r+1},\ldots,x_n)$ such that
$\tilde{f}(x_{r+1}^{0},\ldots,x_{n}^{0})=0$, $\tilde{f}(y_{r+1}^{0},\ldots,
x_{n}^{0})\ne 0$. Homogenizing $\tilde{f}$ with respect to the weight vector $w'$
we get a $w'$-homogeneous polynomial $f$ such that $f(x^0)=0$, $f(y^0)\ne 0$.
By construction, $f$ is also $w$-homogeneous, not a monomial, and not contained
in any of the minimal associated primes of $\init_w I$.

\begin{lemma}
Let $I$, $w$, and $f$ be as above. Then, in the ring $k[[x_1,\ldots,x_n]]$, 
$$\init_w(I+(f))=\init_w I + (f).$$
\end{lemma}
\begin{proof}
Consider an arbitrary $fg+h\in (f)+I$, $h\in I$, $g\in k[[x_1,\ldots,x_n]]$. If
$f\cdot\init_w g+\init_w h\ne 0$, then it is the $w$-initial form of $fg+h$ and
is contained in $\init_w I+(f)$. Assume that $f\cdot\init_w g+\init_w h=0$.
Since $f$ is not a zero divisor modulo $\init_w I$, we have $\init_w g\in
\init_w I$. Let $\tilde{g}\in I$ be such that $\init_w\tilde{g}=\init_w g$.
Then $fg+h$ has the same initial form as
$$fg\pm f\tilde{g}+h=fg'+h',$$
where $h'\in I$ and $g'$ has $w$-order strictly greater than $g$. Repeating
this argument we come to the situation when the initial forms of $fg'$ and $h'$
do not cancel, and thus $\init_w(fg+h)\in \init_w I+(f)$. This proves the 
inclusion
$$\init_w(I+(f))\subseteq\init_w I + (f).$$
The inverse inclusion is clear.
\end{proof}

By construction, the ideal $\init_w(I+(f))=\init_w I+(f)$ is monomial free. By
Bergman's Theorem~\ref{T:Bergman}, the monomial valuation $v'$ on 
$k[[x_1,\ldots,x_n]]$ defined by $v'(x_i)=w_i$, $i=1,\ldots,n$, can be 
transformed to a valuation $\bar{v}$ such that $\bar{v}(x_i)=w_i$, $i=1,\ldots,n$,
and $\bar{v}|_{I+(f)}=+\infty$. Therefore we can pass to the ring $R'=R/(f)$,
$\dim R'=d-1$.

\emph{Step 3.} It remains to consider the case $d=\dim R=r$. The rational rank
$r'$ of the valuation $v$ is not greater than $d$ by Abhyankar inequality, so
in this case $r=r'$. Choose a system of parameters $y_1,\ldots,y_d$ for $R$
such that $v_1=v(y_1)$, $\ldots$, $v_d=v(y_d)$ are linearly independent over
$\Q$. The ring $R$ is integral over $k[[y_1,\ldots,y_d]]$.

Embed $k[[y_1,\ldots,y_d]]$ to the ring of Hahn series $\mathcal{O}'
\subset k((t^\R))$ by sending $y_i$ to $t^{v_i}$, $i=1,\ldots,d$. The natural 
valuation $\mu$ of $k((t^\R))$ is an extension of $v$ from $k[[y_1,\ldots,y_d]]$ 
via this embedding. By \cite[Chapter XII, \S 3]{Lang}, every extension of $v$ 
from $k[[y_1,\ldots,y_d]]$ to $R$ can be induced by an embedding of $R$ to 
$$\overline{Q(k[[y_1,\ldots,y_d]])_v},$$
i.e., to the algebraic closure of the completion with respect to $v$ of the field
of fractions of $k[[y_1,\ldots,y_d]]$. Since $k((t^\R))$ is algebraically closed 
and complete, we conclude that there exists an embedding of $R$ to $k((t^\R))$ 
such that $\mu$ induces $v$. Moreover, the method of Newton polygon shows that 
the image of $R$ is contained in $\mathcal{O}'$. 

Finally, assume that $v$ takes only rational values on $x_i$ and the field $k$
has characteristic $0$. This implies, in particular, that $r=d=1$. This time
we embed $k[[y]]=k[[y_1]]$ to the ring of Puiseux series $\mathcal{O}$ by 
sending $y$ to $t^{v_1}$. As above, we get an embedding of $R$ to $k((t^\R))$
inducing the valuation $v$, but, since the field $k((t^\Q))$ is also 
algebraically closed, the image is contained in $k((t^\Q))$ and, by the method
of Newton polygon, in $\mathcal{O}$.
\end{proof}

The following theorem shows weak universality of the field of Hahn series with 
respect to valuations on finitely generated algebras.

\begin{theorem}
Let $R$ be a finitely generated algebra over a field $k$. Let $v$ be a 
valuation of $R$ and $x_1,\ldots,x_n$ a finite collection of elements of $R$. 
Then there exists a homomorphism $f\colon R\to \bar{k}((t^\R))$ to the field of 
Hahn series with coefficients in $\bar{k}$ such that for all 
$i=1,\ldots,n$, $v(x_i)=\mu(f(x_i))$. Moreover, if the characteristic of $k$ is 
$0$, the elements $x_1,\ldots,x_n$ generate $R$ over $k$, and $v(x_i)\in\Q$ 
for all $i=1,\ldots,n$, then there exists a homomorphism $f\colon R\to
\bar{k}((t^\Q))$ to the field of Puiseux series with coefficients in $\bar{k}$ 
with the same property.
\end{theorem}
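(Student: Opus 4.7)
The strategy parallels the proof of Theorem~\ref{T:wucomplete}, replacing the formal power series ring $\bar{k}[[X_1,\ldots,X_n]]$ (and the local Gr{\"o}bner fan) with the polynomial ring $\bar{k}[X_1,\ldots,X_n]$ (and the usual Gr{\"o}bner fan). I first perform the analogue of Step~1 of Theorem~\ref{T:wucomplete}: reduce to $k=\bar{k}$ algebraically closed, $R$ a domain, $\home(v)=\{0\}$ (so $v$ extends to $Q(R)$), and, after enlarging $\{x_i\}$ to include algebra generators, $R=\bar{k}[X_1,\ldots,X_n]/I$ for a prime ideal $I$ with $x_i=X_i\bmod I$. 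The enlargement is harmless since the conclusion for more elements implies the conclusion for fewer, and the Newton polygon method \cite[Chapitre VI, \S 4, exercise 11]{Bour} ensures the relevant valuations extend along the integral extensions involved.

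The proof then proceeds by induction on $d-r$, where $d=\dim R$ and $r$ is the $\Q$-dimension of the span of $v(x_1),\ldots,v(x_n)$. The descent step (analogous to Step~2 of Theorem~\ref{T:wucomplete}) assumes $r<d$: with $w=(v(x_1),\ldots,v(x_n))$, the initial ideal $\init_w I$ is monomial-free; I select a rational weight $w'\in L_\Q(w)$ in the relative interior of the Gr{\"o}bner cone containing $w$, exploit the flat degeneration equality $\dim R = \dim \bar{k}[X]/\init_{w'}I$ to conclude all minimal primes of $\init_w I$ have depth $d$, and construct a $w$- and $w'$-homogeneous polynomial $f\in\bar{k}[X]$ that avoids these primes and is not a monomial. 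The identity $\init_w(I+(f))=\init_w I + (f)$ holds with essentially the same proof as in the complete local case, this ideal is monomial-free, and Bergman's Theorem~\ref{T:Bergman} yields a valuation $\bar{v}$ on $R/(f)$ with $\bar{v}(x_i)=v(x_i)$; induction applied to $R/(f)$ (of dimension $d-1$) concludes this case.

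The base case $d=r$ is handled by direct embedding. I choose $y_1,\ldots,y_d\in R$ with $v(y_1),\ldots,v(y_d)$ linearly independent over $\Q$; such elements are automatically algebraically independent over $k$ (a relation $P(y)=0$ would force the unique monomial realizing the minimum valuation to vanish), so they form a transcendence basis of $Q(R)/k$, and $v$ restricts to the monomial valuation on $k[y_1,\ldots,y_d]$. The assignment $y_i\mapsto t^{v(y_i)}$ embeds $k(y_1,\ldots,y_d)$ into $\bar{k}((t^\R))$ with $\mu$ pulling back to $v$. Since $Q(R)/k(y_1,\ldots,y_d)$ is a finite algebraic extension and $\bar{k}((t^\R))$ is algebraically closed and complete, the embedding extends to $Q(R)$ via the algebraic closure of the completion of $k(y_1,\ldots,y_d)$ with respect to $v$ (\cite[Chapter XII, \S 3]{Lang}), and restricting to $R$ yields the desired map. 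For the Puiseux case, the assumption $v(x_i)\in\Q$ forces $r\leq 1$, so $d\leq 1$; the same construction works with $\bar{k}((t^\Q))$, algebraically closed when $\mathrm{char}\,k=0$. The main technical obstacle, as in Theorem~\ref{T:wucomplete}, is the descent step---specifically the construction of $f$ and the invocation of Bergman's theorem---while the remainder is essentially formal.
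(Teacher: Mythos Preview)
Your proposal is correct and matches exactly the second of the two routes the paper indicates: the paper's proof simply states that the result follows from known lifting results for tropical varieties (\cite{JMM}) or, alternatively, by an argument parallel to the proof of Theorem~\ref{T:wucomplete}; you have spelled out that parallel argument in detail, with the expected substitutions (polynomial ring for power series ring, ordinary Gr{\"o}bner fan for local Gr{\"o}bner fan, target the field rather than the valuation ring).
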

\begin{proof}
This theorem follows from the results on lifting points in tropical varieties,
see, e.g., \cite{JMM}. Alternatively, it can be proven by an argument parallel
to the proof of Theorem~\ref{T:wucomplete}.
\end{proof}

\section{Lifting points in local tropical varieties}\label{S:liftpoint}

In this section we recall briefly the definition of local tropicalization
following \cite{PPS} and deduce the lifting points property of local tropical 
varieties from Theorem~\ref{T:wucomplete}. Let $M$ be a finitely generated
free abelian group and $M_\R$ the vector space $M\bigotimes_Z \R$. For 
rational polyhedral cones $\check\sigma$ in $M_\R$, we consider additive
semigroups $\Gamma$ of the form $\check\sigma\cap M$ or, more generally,
finitely generated subsemigroups (with neutral element) of $\check\sigma\cap M$ 
such that $\sat(\Gamma)$, the saturation of $\Gamma$ (see 
\cite[Definition~2.10]{PPS}), equals $\check\sigma\cap M$. Each semigroup 
homomorphism from $\Gamma$ to $\R$ lifts uniquely to a group homomorphism from $M$
to $\R$. Thus we can identify the group $\Hom_{sg}(\Gamma,\R)$ of all semigroup 
homomorphisms from $\Gamma$ to $\R$ with the dual space of $M_\R$; we denote this 
dual space by $N_\R$, and by $N$ the dual lattice of $M$. The semigroup 
homomorphisms from $\Gamma$ to the extended real line $\Ri=\R\cup\{+\infty\}$ 
live in a certain partial compactification of $N_\R$ that is denoted by 
$L(\sigma,N)$ in \cite[Section~4]{PPS} and called the \emph{linear variety} 
corresponding to $\sigma$ and $N$; here $\sigma$ is the dual cone of 
$\check\sigma$. The cone $\sigma$ corresponds to nonnegative homomorphisms from 
$\Gamma$ to $\R$; the nonnegative homomorphisms from $\Gamma$ to $\Ri$ form a 
certain subspace of $L(\sigma,N)$ denoted $\overline{\sigma}$. We denote by 
$\overline{\sigma}^\circ$ the interior of $\overline{\sigma}^\circ$.

Now, let $R$ be a local ring with the maximal ideal $\mathfrak{m}$. Let 
$\V_{\geq 0}(R)$ be the set of all nonnegative real valuations of $R$ and 
$\V_{>0}(R)$ the set of all local valuations of $R$, i.e., valuations that are 
nonnegative on $R$ and positive on the maximal ideal $\mathfrak{m}$. Let 
$\gamma\colon\Gamma\to R$ be a homomorphism from $\Gamma$ to the multiplicative 
semigroup of $R$ and assume that none of the elements of 
$\gamma^{-1}(\mathfrak{m})$ is invertible in $\Gamma$. Then, for each 
$v\in\V_{\geq 0}$ (resp. $v\in\V_{>0}$), the composition $v\circ\gamma$ is an 
element of $\overline{\sigma}$ (resp. $\overline{\sigma}^\circ$). In this way we 
get a map
$$\trop\colon \V_{\geq 0}(R)\to \overline{\sigma}\quad
(\text{resp. } \trop\colon \V_{>0}(R)\to \overline{\sigma}^\circ),$$
which we call the \emph{tropicalization map}.

\begin{definition}
The \emph{local nonnegative tropicalization} of the morphism $\gamma$, denoted
$\nntrop(\gamma)$, is the image of $\V_{\geq 0}(R)$ in $\overline{\sigma}$ under 
the tropicalization map $\trop$. The \emph{local positive tropicalization}, 
denoted $\ptrop(\gamma)$, is the closure in $\overline{\sigma}^\circ$ of the 
image of $\V_{>0}(R)$ under the tropicalization map.
\end{definition}

Next we restrict to local rings $R$ of special type. Suppose that the cone
$\check\sigma$ is \emph{strictly convex}, in particular, the only invertible
element of $\Gamma$ is $0$. Let $k$ be a field and consider all possible
formal series
$$\sum_{m\in\Gamma} a_m \chi^m,$$
where $a_m\in k$. Any two such series can be added and multiplied in a natural 
way, in particular, $\chi^m\cdot\chi^{m'}=\chi^{m+m'}$. These series form a 
complete Noetherian local ring $k[[\Gamma]]$ called the \emph{ring of formal 
power series} over $\Gamma$, see \cite[Section~8]{PPS}. There is a natural 
semigroup homomorphism $\Gamma\to k[[\Gamma]]$ defined by $\Gamma\ni 
m\mapsto \chi^m$, the elements $\chi^m\in k[[\Gamma]]$ being called the 
\emph{monomials}. If $I\subset k[[\Gamma]]$ is an ideal, let $\gamma$ be the 
composition of the natural maps
$$\Gamma\to k[[\Gamma]]\to R=k[[\Gamma]]/I.$$
Now, we can consider the nonnegative (resp. positive) tropicalization 
$\nntrop(\gamma)$ (resp. $\ptrop(\gamma)$) of $\gamma$. In the situation just
described, we denote also $\nntrop(\gamma)=\nntrop(I)$ ($\ptrop(\gamma)=\ptrop(I)$)
and call it the \emph{local nonnegative} (resp. \emph{positive}) 
\emph{tropicalization of the ideal} $I$.

It is proven in \cite[Theorem~11.9]{PPS} that both nonnegative $\nntrop(I)
\subseteq\overline{\sigma}$ and positive $\ptrop(I)\subseteq
\overline{\sigma}^\circ$ tropicalizations are rational PL conical subspaces of
real dimension $d$ equal to the Krull dimension of $R$. This means, in particular,
that the part of $\nntrop(I)$ contained in $N_\R$ is a support of a rational
(with respect to the lattice $N$) polyhedral fan. The nonnegative tropicalization
$\nntrop(I)$ is stratified by the positive tropicalizations $\ptrop(I_\tau)$
of certain truncations of the ring $R$, see \cite[Section~12, in particular
Lemma~12.9]{PPS} for the details. 

\begin{theorem}[Lifting points lemma for local tropical varieties]
\label{T:liftpoint}
Let $I\subset k[[\Gamma]]$ be an ideal. If $u\in\ptrop(I)$, then there exists a 
homomorphism $f\colon R=k[[\Gamma]]/I\to \bar{k}[[t^\R]]$ to the ring of
Hahn series with coefficients in $\bar{k}$ such that 
$u=\trop(\mu\circ f)$, where $\mu$ is the natural valuation of $\bar{k}[[t^\R]]$.
Moreover, if the field $k$ has characteristic $0$ and the point $u$ is rational, 
then there exists a homomorphism $f\colon R\to \bar{k}[[t^\Q]]$ to the ring of 
Puiseux series with the same property. 
\end{theorem}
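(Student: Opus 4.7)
The plan is to deduce the theorem directly from the weak universality property of the Hahn series ring, Theorem~\ref{T:wucomplete}. Since the semigroup $\Gamma$ is finitely generated, fix a finite generating set $m_1,\ldots,m_n\in\Gamma$ and set $x_i=\chi^{m_i}\in R=k[[\Gamma]]/I$. Strict convexity of $\check{\sigma}$ ensures that each $m_i$ is non-invertible in $\Gamma$, so $x_i$ lies in the maximal ideal $\mathfrak{m}$ of $R$.

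The first step is to exhibit a local valuation $v\in \V_{>0}(R)$ such that $v(x_i)=u(m_i)$ for all $i=1,\ldots,n$. By definition, $\ptrop(I)$ is the closure in $\overline{\sigma}^\circ$ of the image of $\V_{>0}(R)$ under the tropicalization map, so in the generic situation $u$ is itself of the form $v\circ\gamma$ and we pick the corresponding $v$. For a point arising only as a limit, I would use the rational polyhedral fan structure of $\ptrop(I)$ from \cite[Theorem~11.9]{PPS} together with a Gr\"obner/initial-ideal argument parallel to Steps~2--3 of the proof of Theorem~\ref{T:wucomplete}: the initial ideal $\init_u I$ is monomial-free, and Bergman's Theorem~\ref{T:Bergman} applied to the monomial pseudovaluation $v'$ with $v'(\chi^{m_i})=u(m_i)$ passes to the quotient by $\init_u I$ and can then be lifted to a genuine local valuation of $R$ matching $u$ on all the $x_i$.

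Once such $v$ is in hand, the conclusion follows at once. Apply Theorem~\ref{T:wucomplete} to the complete Noetherian local ring $R$, the local valuation $v$, and the finite collection $x_1,\ldots,x_n\in\mathfrak{m}$; this produces a homomorphism $f\colon R\to\bar{k}[[t^\R]]$ with $\mu(f(x_i))=v(x_i)=u(m_i)$ for every $i$. Since the $m_i$ generate $\Gamma$ as a semigroup, both $\mu\circ f\circ\gamma$ and $u$ are semigroup homomorphisms $\Gamma\to\Ri$ agreeing on a generating set, and hence coincide on all of $\Gamma$; that is, $\trop(\mu\circ f)=u$. For the ``moreover'' part, suppose $k$ has characteristic $0$ and $u$ is rational, so $v(x_i)=u(m_i)\in\Q$; if $x_1,\ldots,x_n$ do not yet analytically generate $R$, extend the list to a set of analytic generators, and use the Newton polygon method as in Step~1 of the proof of Theorem~\ref{T:wucomplete} to check that the adjoined $v$-values remain rational. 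The rational part of Theorem~\ref{T:wucomplete} then produces $f\colon R\to\bar{k}[[t^\Q]]$ with the same property.

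The main obstacle is the first step: reconciling the closure in the definition of $\ptrop(I)$ with the need for a genuine rank-one local valuation realizing $u$ exactly. Everything else is a routine consequence of weak universality, but to rule out that $u$ is only a limit point one must invoke either the polyhedral fan structure of local tropicalization from \cite{PPS} or, more directly, an initial-ideal plus Bergman argument analogous to the one already developed in Section~\ref{S:ufield}; this is where the real content of the lifting result resides.
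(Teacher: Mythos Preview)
Your overall strategy matches the paper's: realize $u$ as $\trop(v)$ for some $v\in\V_{>0}(R)$, then apply Theorem~\ref{T:wucomplete} to the generators $x_i=\chi^{m_i}$. Three points deserve correction.

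First, the closure obstacle you flag dissolves by direct citation: the paper invokes \cite[Theorem~11.2]{PPS}, which asserts that the image of $\V_{>0}(R)$ under $\trop$ is already closed in $\overline{\sigma}^\circ$, so every $u\in\ptrop(I)$ is $\trop(v)$ for some genuine $v$ without further work. Your Bergman sketch is the right underlying mechanism but is phrased imprecisely: Theorem~\ref{T:Bergman} is applied with the ideal $I$ itself (monomial-freeness of $\init_u I$ is exactly its hypothesis), and it produces directly a valuation on $k[[\Gamma]]$ that is $+\infty$ on $I$ and agrees with $u$ on monomials; there is no intermediate passage through $k[[\Gamma]]/\init_u I$ followed by a separate ``lift'' to $R$. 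Note also that the assertion ``$\init_u I$ is monomial-free for $u$ in the closure'' is itself part of \cite[Theorem~11.2]{PPS}, so you are not really avoiding that reference.

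Second, you omit the case $u\in\overline{\sigma}^\circ\setminus N_\R$, where some $u(m_i)=+\infty$. The paper handles this by observing that such a $v$ factors through a quotient $R/J$ of the same shape, reducing to the finite case.

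Third, your worry in the ``moreover'' part about $x_1,\ldots,x_n$ not analytically generating $R$ is unnecessary: since $m_1,\ldots,m_n$ generate $\Gamma$, the monomials $\chi^{m_i}$ already analytically generate $k[[\Gamma]]$ and hence $R$, so the rational clause of Theorem~\ref{T:wucomplete} applies directly.
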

\begin{proof}
By \cite[Theorem~11.2]{PPS}, the image of $\V_{>0}(R)$ in 
$\overline{\sigma}^\circ$ is closed under the tropicalization map, hence there 
exists $v\in\V_{>0}(R)$ such that $u=\trop(v)$. If $u\in \ptrop(I)\subseteq
\overline{\sigma}^\circ$ belongs to some stratum of $\overline{\sigma}$ at 
infinity, then $v$ is induced by a valuation on a quotient $R/J$ of the ring
$R$, where $R/J$ also has the form $k[[\Gamma']]$ for some semigroup $\Gamma'$, 
see \cite[the end of Section~8]{PPS}. Therefore it suffices to consider the case 
$u\in N_\R$. Choose monomials $x_1,\ldots,x_n\in\Gamma$ which generate the 
semigroup $\Gamma$. A semigroup homomorphism from $\Gamma$ to $\R$ is uniquely 
determined by the images of $x_1,\ldots,x_n$. By Theorem~\ref{T:wucomplete} there
exists a homomorphism $f\colon R\to \bar{k}[[t^\R]]$ such that $\mu(f(x_i))=
u(x_i)$ for all $i=1,\ldots,n$. If the characteristic of $k$ is $0$ and $u$ is 
rational, a homomorphism $f$ can be chosen so that the target ring is 
$\bar{k}[[t^\Q]]$. This $f$ is the required homomorphism.
\end{proof}

\begin{corollary}
Let $I\subset k[[\Gamma]]$ be an ideal. The following three definitions of the
local positive tropicalization of $I$ are equivalent:
\begin{itemize}
\item[1)] $\ptrop(I)$ is the image of $\V_{>0}(k[[\Gamma]]/I)$ under the
tropicalization map;
\item[2)] $\ptrop(I)$ is the set of all $w\in\overline{\sigma}^\circ$ such that 
the initial ideal $\init_w I$ is monomial free;
\item[3)] $\ptrop(I)$ is the set $\{\trop(\mu\circ f)\,|\,f\colon R\to 
\bar{k}[[t^\R]]\}$, where $f$ runs over all local homomorphisms from
$R=k[[\Gamma]]/I$ to $\bar{k}[[t^\R]]$.
\end{itemize}
If the field $k$ has characteristic $0$, then $\ptrop(I)$ can also be described
as
\begin{itemize}
\item[$3'$)] the closure of the set $\{\trop(\mu\circ f)\,|\,f\colon R\to 
\bar{k}[[t^\Q]]\}$ in $\overline{\sigma}^\circ$, where $f$ runs over all local 
homomorphisms from $R$ to $\bar{k}[[t^\Q]]$.
\end{itemize}
\end{corollary}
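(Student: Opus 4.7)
The plan is to prove three equivalences: $1) \Leftrightarrow 2)$, $1) \Leftrightarrow 3)$, and $1) \Leftrightarrow 3')$. The first of these is essentially \cite[Theorem~11.9]{PPS}, which characterises local positive tropicalisation by monomial-freeness of initial ideals; I would simply cite it. The main new content is the equivalence with $3)$ and $3')$, both of which rest on Theorem~\ref{T:liftpoint}.

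For $1) \Rightarrow 3)$, I would start from $u \in \ptrop(I)$ and use \cite[Theorem~11.2]{PPS} --- which asserts that the image of $\V_{>0}(R)$ is already closed in $\overline{\sigma}^\circ$ --- to obtain a local valuation $v$ of $R$ with $\trop(v) = u$. Fixing monomial generators $x_1, \ldots, x_n$ of $\Gamma$, I observe that a semigroup homomorphism $\Gamma \to \Ri$ is determined by its values on the $x_i$, so it suffices to produce a local $f \colon R \to \bar{k}[[t^\R]]$ with $\mu(f(x_i)) = v(x_i)$ for each $i$. Such an $f$ is exactly what Theorem~\ref{T:liftpoint} (applied to $v$ and the $x_i$) delivers. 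Locality of $f$ is automatic, since each $x_i$ is a nonzero, non-invertible element of $\Gamma$, hence lies in $\mathfrak{m}$, and $\mu(f(x_i)) = v(x_i) > 0$ forces $f(\mathfrak{m})$ into the maximal ideal of $\bar{k}[[t^\R]]$.

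The reverse implication $3) \Rightarrow 1)$ is a formality: any local $k$-algebra homomorphism $f \colon R \to \bar{k}[[t^\R]]$ gives a composition $\mu \circ f$ that is nonnegative on $R$ and positive on $\mathfrak{m}$, hence lies in $\V_{>0}(R)$, and its image under $\trop$ belongs to $\ptrop(I)$ as defined in $1)$.

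For $3')$, in characteristic zero, the reverse inclusion is identical (any local $f \colon R \to \bar{k}[[t^\Q]]$ composes with the inclusion $\bar{k}[[t^\Q]] \hookrightarrow \bar{k}[[t^\R]]$). In the forward direction, the ``moreover'' clause of Theorem~\ref{T:liftpoint} produces lifts only for rational points $u$, so only rational points of $\ptrop(I)$ are directly hit; taking the closure then recovers the whole of $\ptrop(I)$, because under the description via $2)$ as the support of a rational polyhedral fan in $N_\R$ the rational points are dense. The main obstacle --- lifting a given local valuation to an actual homomorphism into the Hahn or Puiseux series ring --- is already absorbed into Theorem~\ref{T:liftpoint}; once that theorem is available, the corollary reduces to bookkeeping about the meaning of ``local'' for homomorphisms and to the density of rational points needed for $3')$.
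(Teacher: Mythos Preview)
Your approach matches the paper's: cite \cite{PPS} for $1)\Leftrightarrow 2)$, invoke Theorem~\ref{T:liftpoint} for $1)\Leftrightarrow 3)$ and $3')$, and use the rational PL structure of $\ptrop(I)$ to get density of rational points for $3')$. One small correction: in the paper the equivalence $1)\Leftrightarrow 2)$ is \cite[Theorem~11.2]{PPS}, while \cite[Theorem~11.9]{PPS} is what supplies the rational polyhedral structure needed for $3')$---you have these two citations interchanged.
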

\begin{proof}
Equivalence of 1) and 2) is proven in \cite[Theorem~11.2]{PPS}. Equivalence of
1) and 3), and, if the characteristic of $k$ is $0$, of 1) and $3'$), follows 
from Theorem~\ref{T:liftpoint} and from the fact that the local positive 
tropicalization defined by 1) is a \emph{rational} conical PL space, 
\cite[Theorem~11.9]{PPS}.
\end{proof}

\end{document}